\renewcommand\eqref[1]{(\ref{#1})} 
\numberwithin{equation}{section}  
\theoremstyle{plain}
\newtheorem{thm}{Theorem}[section]
\newtheorem{cor}[thm]{Corollary}
\newtheorem{lem}{Lemma}[section]
\theoremstyle{definition}
\newtheorem{defn}[thm]{Definition}
\theoremstyle{remark}
\def\R{{\mathbb R}}
\def\C{{\mathbb C}}
\def\L2tx{{L^2(\R_t\times\R^n_x)}}
\def\p#1{{\left({#1}\right)}}
\def\b#1{{\left\{{#1}\right\}}}
\def\n#1{{\left\|{#1}\right\|}}
\def\abs#1{{\left|{#1}\right|}}
\def\jp#1{{\left\langle{#1}\right\rangle}}
\def\supp{\operatorname{supp}}
\def\A{{\mathcal A}}
\title[Global regularity properties of Fourier integral operators]
{Global regularity properties for \\ a class of Fourier integral operators}
\author[Michael Ruzhansky and Mitsuru Sugimoto]{Michael Ruzhansky and Mitsuru Sugimoto}
\address{
  Michael Ruzhansky:
  \endgraf
  Department of Mathematics
  \endgraf
  Imperial College London
  \endgraf
  180 Queen's Gate, London SW7 2AZ, UK
  \endgraf
  {\it E-mail address} {\rm m.ruzhansky@imperial.ac.uk}
  \endgraf
  \medskip
 Mitsuru Sugimoto:
  \endgraf
  Graduate School of Mathematics
  \endgraf
  Nagoya University
  \endgraf
  Furocho, Chikusa-ku, Nagoya 464-8602, Japan
  \endgraf
  {\it E-mail address} {\rm sugimoto@math.nagoya-u.ac.jp}
  }
\thanks{The first
 author was supported in parts by the EPSRC
 grant EP/K039407/1 and by the Leverhulme Grant RPG-2014-02.
}
\subjclass{Primary 35S30; Secondary 35B65, 35L40}
\keywords{Fourier integral operators, $L^p$-estimates, hyperbolic equations}
\date{\today}
\dedicatory{Dedicated to the memory of Professor Hans Duistermaat (1942--2010)}
\begin{document}

\begin{abstract}
While the local $L^p$-boundedness of nondegeneral Fourier integral operators is known
from the work of Seeger, Sogge and Stein \cite{SSS}, not so many results are available
for the global boundedness on $L^p(\R^n)$.
In this paper we give a sufficient condition for
the global $L^p$-boundedness for a class of Fourier integral operators which
includes many natural examples. We also describe a construction that is used to
deduce global results from the local ones. An application is given to obtain global $L^p$-estimates
for solutions to Cauchy problems for hyperbolic partial differential equations.
\end{abstract}

\maketitle

\section{Introduction}\label{S1}
In this article, we discuss the global $L^p$-boundedness
of the Fourier integral operators
\[
\mathcal P u(x)
=\int_{\R^n}\int_{\R^n} e^{i\phi(x,y,\xi)}
a(x,y,\xi) u(y)\, dy  d\xi\quad(x\in \R^n).
\]
We always assume that $n\geq 1$ and $1<p<\infty$.
Here $\phi(x,y,\xi)$ is a real-valued function that is called a {\it phase function}
while $a(x,y,\xi)$ is called an {\it amplitude function}.
Following the theory of Fourier integral operators by H\"ormander \cite{Ho}, 
we originally assume that $\phi(x,y,\xi)$ is positively homogeneous
of order $1$ and smooth at $\xi\neq0$,
and that $a(x,y,\xi)$ is smooth and satisfies
a growth condition in $\xi$ with some $\kappa\in\R$:
\[
\sup_{(x,y)\in K}\abs{\partial_x^\alpha\partial_y^\beta\partial_\xi^\gamma
 a(x,y,\xi)}
\leq C_{\alpha\beta\gamma}^K\jp{\xi}^{\kappa-|\gamma|}\quad
(\forall \alpha, \beta, \gamma)\,;\quad 
\langle \xi\rangle=\p{1+|\xi|^2}^{1/2}
\]
for any compact set $K\subset\R^n\times\R^n$.
Then the operator $\mathcal P$ is just a microlocal
expression of the corresponding Lagrangian manifold, and with
the local graph condition, it is microlocally equivalent to the special form
\begin{equation}\label{EQ:P}
P u(x)=
\int_{\R^n}\int_{\R^n} e^{i(x\cdot\xi-\varphi(y,\xi))}a(x,y,\xi)u(y)\, dy d\xi
\end{equation}
by an appropriate microlocal change of variables.

\medskip
The local $L^p$ mapping properties of Fourier integral operators
have been extensively studied, and can be generally summarised as follows:
\begin{itemize}
\item $\mathcal P$ is $L^2_{comp}$-$L^2_{loc}$-bounded
when $\kappa\leq 0$ (H\"ormander \cite{Ho}, Eskin \cite{Es});
\item $\mathcal P$ is $L^p_{comp}$-$L^p_{loc}$-bounded
when $\kappa\leq -(n-1)|1/p-1/2|$, $1<p<\infty$ (Seeger, Sogge and Stein \cite{SSS});
\item $\mathcal P$ is $H^1_{comp}$-$L^1_{loc}$-bounded
when $\kappa\leq -\frac{n-1}{2}$ (Seeger, Sogge and Stein \cite{SSS}), where
here and everywhere $H^1=H^1(\R^n)$ is the Hardy space introduced by Fefferman and Stein \cite{FS};
\item $\mathcal P$ is locally weak $(1,1)$ type when $\kappa\leq -\frac{n-1}{2}$ 
(Tao \cite{Tao:weak11}).
\end{itemize}

The sharpness of order the $-(n-1)|1/p-1/2|$ was shown by Miyachi \cite{Mi}
and Peral \cite{Pe}
(see also \cite{SSS}). 
Therefore, the question addressed in this paper is when Fourier integral operators are globally
$L^p$-bounded.
Although the operator $\mathcal P$ or $P$
is just a microlocal expression of
the corresponding Lagrangian manifold due to the Maslov cohomology class
(see e.g. Duistermaat \cite{Duistermaat:FIO-book-1996}), 
we still regard it as a globally defined operator
since it is still important for the applications to the theory of
partial differential equations.
Indeed, the operator $P$ is used to:
\begin{itemize}
\item
express solutions to Cauchy problems of hyperbolic equations;
\item
transform equations to another simpler one (Egorov's theorem).
\end{itemize}
For example, the solution to the wave equation
\begin{equation*}
\left\{
\begin{array}{ll}
\partial_t^2 u(t,x) - \Delta u(t,x) = 0,\ t \in \R,\ x \in \R^n, \\
u(0,x) = g(x),\ \partial_t u(0,x) = 0,\ x \in \R^n,
\end{array}
\right.
\end{equation*}
is expressed as a linear combination of the operators of the form
\[
 P g(x)=\int_{\R^n} \int_{\R^n} e^{i(x\cdot\xi-y\cdot \xi \pm t|\xi|)} g(y) \, dyd\xi.
\]
On the other hand,
we have the relation
\[
 P\cdot\sigma(D)=\p{\sigma\circ\psi}(D)\cdot P
\]
if we take $a(x,y,\xi)=1$ and
$\varphi(y,\xi)=y\cdot\psi(\xi)$
so that we can 
transform\footnote{Microlocally, this idea was explored by Duistermaat and
H\"ormander \cite{Duistermaat-Hormander:FIOs-2} in a variety of problems, while in the global analysis it
was applied by the authors in \cite{RS2,RS4} to the study of the global smoothing estimates.}
the operator $\sigma(D)$ to
$\p{\sigma\circ\psi}(D)$
which might have been very well investigated.
Summarising these situations, the typical two types of phase functions for each analysis are
\[
\text{
(I)\hspace{5mm} $\varphi(y,\xi)=y\cdot\xi+|\psi(\xi)|$,
\hspace{10mm}
(II)\hspace{5mm}  $\varphi(y,\xi)=y\cdot\psi(\xi)$,}
\]
where $\psi(\xi)$ is a real vector-valued smooth function which is
positively homogeneous of order $1$ for large $\xi$.
(See Definition \ref{Def:hom} for the precise meaning of this terminology).

\medskip
A global $L^2$-boundedness result of the operator $P$ with
the phase function (I) was given by
Asada and Fujiwara \cite{AF} which states it for rather general operators
$\mathcal P$:
\begin{thm}[\cite{AF}]\label{Th:AF}
Let $\phi(x,y,\xi)$ and $a(x,y,\xi)$ be $C^\infty$-functions, and let
\[
D(\phi):=
\begin{pmatrix}
\partial_x\partial_y\phi&\partial_x\partial_\xi\phi
\\
\partial_\xi\partial_y\phi&\partial_\xi\partial_\xi\phi
\end{pmatrix}.
\]
Assume that
$|\det D(\phi)|\geq C>0$.
Also assume that every entry of the matrix $D(\phi)$, $a(x,y,\xi)$
and all their derivatives are bounded. 
Then $\mathcal P$ is $L^2(\R^n)$-bounded.
\end{thm}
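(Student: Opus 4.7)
The plan is a standard $T^*T$ argument combined with Schur's test. Since $\n{\mathcal P}_{L^2\to L^2}^2=\n{\mathcal P^*\mathcal P}_{L^2\to L^2}$, I would show that the distributional kernel
\[
K(y_1,y_2)=\iiint_{\R^{3n}}e^{i[\phi(x,y_2,\xi_2)-\phi(x,y_1,\xi_1)]}\,\overline{a(x,y_1,\xi_1)}\,a(x,y_2,\xi_2)\,dx\,d\xi_1\,d\xi_2
\]
of $\mathcal P^*\mathcal P$ satisfies $\abs{K(y_1,y_2)}\leq C_N\jp{y_1-y_2}^{-N}$ uniformly in $y_1,y_2$ for every $N$; Schur's lemma then delivers the $L^2$-boundedness of $\mathcal P^*\mathcal P$ and hence of $\mathcal P$.

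\medskip

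The engine of the estimate is the non-degeneracy of $D(\phi)$. Setting $\Psi:=\phi(x,y_2,\xi_2)-\phi(x,y_1,\xi_1)$, $Y:=y_2-y_1$, $\Xi:=\xi_2-\xi_1$, the fundamental theorem of calculus along the segment joining $(y_1,\xi_1)$ to $(y_2,\xi_2)$ gives
\[
\p{\partial_x\Psi,\ \partial_{\xi_1}\Psi+\partial_{\xi_2}\Psi}^{T}=\widetilde D(\phi)\,\p{Y,\Xi}^{T},
\]
where $\widetilde D(\phi)$ is an average of $D(\phi)$ over that segment. Under the hypotheses on $D(\phi)$ the matrix $\widetilde D(\phi)$ is uniformly invertible with uniformly bounded inverse, so one can construct a transpose differential operator $L^{t}$ in the variables $(x,\xi_1,\xi_2)$ with $L^{t}e^{i\Psi}=e^{i\Psi}$ and coefficients of size $(\abs Y^2+\abs\Xi^2)^{-1/2}$. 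Iterating $L^{t}$ and transferring derivatives onto the bounded amplitude $\overline a\,a$ produces, modulo lower-order terms of the same form, a factor of order $(\abs Y^2+\abs\Xi^2)^{-N/2}$ in the integrand, for any $N$.

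\medskip

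The main obstacle, and the reason the argument is not a one-line integration by parts, is the convergence of the $(x,\xi_1,\xi_2)$-integral: the amplitudes are only assumed bounded and do not decay as $\abs\xi\to\infty$, so $K$ has to be interpreted as an oscillatory integral. I would handle this by inserting a cutoff $\chi(\varepsilon\xi_1)\chi(\varepsilon\xi_2)$ with $\chi\in C_c^\infty(\R^n)$ equal to $1$ near the origin, carrying out the whole argument uniformly in $\varepsilon>0$, decomposing dyadically in $\abs\Xi$, and on the high-frequency shells performing an additional integration by parts in $x$ alone---this time exploiting the $\partial_x\partial_\xi\phi$ block of $D(\phi)$ to produce factors $\abs\Xi^{-M}$ integrable over $\R^n\times\R^n$. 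Summing the dyadic contributions yields the required pointwise bound on $K$; letting $\varepsilon\to 0$ and invoking Schur's test concludes the proof.
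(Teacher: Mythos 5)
The paper does not prove Theorem~\ref{Th:AF}; it is quoted from Asada and Fujiwara \cite{AF}, so there is no in-paper argument to compare against, and I assess your plan on its own terms. The $T^*T$ reduction is a reasonable starting point, and using an averaged Hessian $\widetilde D(\phi)$ to build an integration-by-parts operator is indeed the mechanism by which the non-degeneracy hypothesis must be exploited. However, the central claim --- that the kernel of $\mathcal P^*\mathcal P$ obeys a pointwise bound $\abs{K(y_1,y_2)}\leq C_N\jp{y_1-y_2}^{-N}$, after which Schur's test finishes the proof --- is false. Take $\phi(x,y,\xi)=(x-y)\cdot\xi$ and $a\equiv 1$; these satisfy every hypothesis of the theorem ($D(\phi)$ has constant entries and $\abs{\det D(\phi)}=1$), yet $\mathcal P$ is a nonzero constant multiple of the identity, so $\mathcal P^*\mathcal P$ is too, and its kernel is a multiple of $\delta(y_1-y_2)$, not a bounded rapidly decaying function. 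Concretely, on the region $\xi_1\approx\xi_2$, $y_1\approx y_2$ your gain factor $(\abs{Y}^2+\abs{\Xi}^2)^{-N/2}$ is useless, the phase $\Psi$ carries no oscillation, and the $(x,\xi_1,\xi_2)$-integral, even with the cutoff $\chi(\varepsilon\xi_j)$, is of size $\varepsilon^{-n}$ and blows up as $\varepsilon\to 0$.

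Two further steps would also fail as written. The final integration by parts in $x$ alone, advertised to gain $\abs{\Xi}^{-M}$, relies on non-degeneracy of the single block $\partial_x\partial_\xi\phi$; the hypothesis only controls $\det D(\phi)$, so $\partial_x\Psi$ can vanish identically even when $\Xi\neq 0$. And even where decay in $\abs{\Xi}$ is available, the integrals over $x$ and over $\xi_1+\xi_2$ still run over all of $\R^n$ with no decaying factor in the integrand. The repair that Asada and Fujiwara actually use is almost-orthogonality rather than Schur's test: partition $\R^n_\xi$ (or $\R^n_y\times\R^n_\xi$) into unit cubes, write $\mathcal P=\sum_k\mathcal P_k$, prove a uniform $L^2$ bound for each piece, then establish $\n{\mathcal P_j^*\mathcal P_k}+\n{\mathcal P_j\mathcal P_k^*}\lesssim\jp{j-k}^{-N}$ by precisely the kind of integration by parts you set up (this is where $\abs{\det D(\phi)}\geq C$ enters), and invoke the Cotlar--Stein lemma. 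That route never requires a pointwise kernel bound across the diagonal, which is exactly where your plan breaks down.
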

The result of \cite{AF} was used to construct
the solution to the
Cauchy problem of Schr\"odinger equations by means of
the Feynman path integrals in Fujiwara \cite{Fu}.
For the operator $P$, the conditions of Theorem \ref{Th:AF} are reduced to
a global version of the {\it local graph condition}
\begin{equation}\label{graphcond}
\abs{\det \partial_y\partial_\xi\varphi(y,\xi)}\geq C>0,
\end{equation}
and the growth conditions
\begin{align*}
&\abs{\partial_y^\alpha\partial_\xi^\beta\varphi(y,\xi)} \le
C_{\alpha\beta}\quad
(\forall\, |\alpha+\beta|\geq2, |\beta|\geq1),
\\
&\abs{\partial_x^\alpha\partial_y^\beta\partial_\xi^\gamma a(x,y,\xi)}
\le C_{\alpha\beta\gamma}\quad
(\forall \alpha, \beta, \gamma),
\end{align*}
for all $x,y,\xi\in\R^n$. Note that the phase function (I) satisfies these conditions.
We also note that the condition 
\eqref{graphcond} is required even for the local $L^2$-boundedness
of Fourier integral operators of order zero, so it is rather natural to assume it to hold 
globally on $\R^n$ as well.

We remark that Kumano-go \cite{Ku} also showed the same conclusion as that of Theorem \ref{Th:AF}
under weaker conditions on the phase function,
namely for 
\[
\abs{\partial_y^\alpha\partial_\xi^\beta (\varphi(y,\xi)-y\cdot\xi)}
\le C_{\alpha\beta}\jp{\xi}^{1-|\beta|}
\quad (\forall \alpha, \beta),
\]
with applications to the global $L^2$ estimates for solutions to Cauchy problems of
strictly hyperbolic equations.

\medskip
Unfortunately the phase function (II) does not satisfy the growth condition
in Theorem \ref{Th:AF} because $\partial_\xi\partial_\xi\varphi$ are usually
unbounded.
Therefore, another type of conditions was introduced by the authors in \cite{RS}
to obtain the global $L^2$-boundedness for operators with phases of the type
(II). Such $L^2$-boundedness results were then used to show global smoothing estimates
for dispersive equations in a series of papers \cite{RS2},
\cite{RS3} and \cite{RS4}. Thus, the result covering the case (II) is as follows:

\begin{thm}[\cite{RS}]\label{Th:RS}
Let $\varphi(y,\xi)$ and $a(x,y,\xi)$ be $C^\infty$-functions.
Assume \eqref{graphcond}.
Also assume that
\begin{align*}
&\abs{\partial_y^\alpha\partial_\xi^\beta\varphi(y,\xi)} \leq
C_{\alpha\beta}\langle y\rangle^{1-|\alpha|}\jp{\xi}^{1-|\beta|}\quad
(\forall \alpha, \beta),
\\
&\abs{\partial_x^\alpha\partial_y^\beta\partial_\xi^\gamma a(x,y,\xi)}
\leq C_{\alpha\beta\gamma}\langle y\rangle^{-|\beta|}\quad
(\forall \alpha, \beta, \gamma),
\end{align*}
hold for all $x,y,\xi\in\R^n$.
Then $P$ is $L^2(\R^n)$-bounded.
\end{thm}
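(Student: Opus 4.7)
To prove the $L^2$-boundedness of $P$, I would show that the Schwartz kernel of $PP^*$ satisfies a Schur-type integrability bound. The kernel is the oscillatory integral
\[
K(x,x')=\int\!\!\int\!\!\int e^{i\Phi(x,x',y,\xi,\eta)}\,a(x,y,\xi)\,\overline{a(x',y,\eta)}\,dy\,d\xi\,d\eta,
\]
with phase $\Phi=x\cdot\xi-x'\cdot\eta+\varphi(y,\eta)-\varphi(y,\xi)$, and the goal is to verify $\sup_x\int|K(x,x')|\,dx'<\infty$ together with the symmetric bound, after appropriate regularisation of the oscillatory integral.

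The central idea is a twofold integration by parts. First, in $y$: since
\[
\partial_y\Phi=M(y,\xi,\eta)(\eta-\xi),\qquad M(y,\xi,\eta):=\int_0^1\partial_y\partial_\xi\varphi(y,\xi+t(\eta-\xi))\,dt,
\]
the global graph condition \eqref{graphcond} together with the uniform bound on $\partial_y\partial_\xi\varphi$ (which follows from the phase hypothesis at $|\alpha|=|\beta|=1$) makes $M$ invertible with bounded inverse in the region where $|\xi-\eta|$ is moderate; the regime $|\xi-\eta|$ large is handled separately by $\xi,\eta$-integration by parts alone. Iterated integration by parts in $y$, via the regularised vector field dual to $\partial_y\Phi$, produces a factor of $\langle\xi-\eta\rangle^{-N}$. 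The precise exponents in $|\partial_y^\alpha\partial_\xi^\beta\varphi|\le C_{\alpha\beta}\langle y\rangle^{1-|\alpha|}\langle\xi\rangle^{1-|\beta|}$ and $|\partial_y^\beta a|\le C_\beta\langle y\rangle^{-|\beta|}$ are calibrated so that every $y$-differentiation of the symbol is absorbed into a $\langle y\rangle^{-1}$-gain, keeping the new symbols uniformly controlled.

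Second, in $\xi$ and $\eta$: since $\partial_\xi\Phi=x-\partial_\xi\varphi(y,\xi)$, integration by parts in $\xi$ produces a factor $\langle x-\partial_\xi\varphi(y,\xi)\rangle^{-N}$, and analogously in $\eta$ for $\langle x'-\partial_\xi\varphi(y,\eta)\rangle^{-N}$. The graph condition enters a second time: for each fixed $\xi$, the map $y\mapsto\partial_\xi\varphi(y,\xi)$ is a global diffeomorphism with uniformly bounded Jacobian, so I change variables $z=\partial_\xi\varphi(y,\xi)-x$ in the $y$-integral, turning the first factor into $\langle z\rangle^{-N}$ and the second into an expression of the form $\langle z-(x'-x)+\rho\rangle^{-N}$, where $\rho$ is a smooth remainder that is small when $|\xi-\eta|$ is small. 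Convolving in $z$ and combining with the $\langle\xi-\eta\rangle^{-N}$ factor then yields $|K(x,x')|\lesssim\langle x-x'\rangle^{-N}$ after the remaining $(\xi,\eta)$-integrations, and Schur's test closes the argument.

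The hard part will be balancing the three integrations by parts simultaneously: each IBP in $y$ gains $\langle y\rangle^{-1}$ from the amplitude but introduces $\langle\xi\rangle$-type growth via higher $y$-derivatives of $\varphi$, and each IBP in $\xi$ gains $\langle\xi\rangle^{-1}$ but introduces $\langle y\rangle$-growth via $\partial_\xi^2\varphi$. The weights $\langle y\rangle^{1-|\alpha|}\langle\xi\rangle^{1-|\beta|}$ in the hypothesis on $\varphi$ and $\langle y\rangle^{-|\beta|}$ in the hypothesis on $a$ are tailored exactly so that these growth rates cancel and a coherent global weighted symbol calculus of $SG$-type closes under the procedure. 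Once this calculus is recorded, the rest is standard oscillatory-integral bookkeeping and a Schur estimate.
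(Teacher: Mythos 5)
This paper does not reprove Theorem \ref{Th:RS}; it only cites \cite{RS}, so I can only evaluate your proposal on its own terms. There is a genuine gap at the very foundation of your plan: with an amplitude $a$ that is merely bounded (the hypothesis gives $|a(x,y,\xi)|\leq C$ with no decay in $\xi$), the kernel $K(x,x')$ of $PP^*$ is not a locally integrable function. It is a distribution with a Calder\'on--Zygmund type singularity on the diagonal $x=x'$ (of the same strength as the kernel of an order-zero pseudodifferential operator, which is what $PP^*$ essentially is by the transversal-composition calculus). Hence the Schur bound $\sup_x\int|K(x,x')|\,dx'<\infty$ is false, and $|K(x,x')|\lesssim\jp{x-x'}^{-N}$ cannot hold uniformly down to $x=x'$ no matter how many integrations by parts you perform; the oscillatory integral remains singular there. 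The $TT^*$ route for $L^2$-boundedness of order-zero operators has to be paired with an almost-orthogonality device rather than a plain Schur test: either a Cotlar--Stein argument after a phase-space partition of unity (this is what Asada--Fujiwara \cite{AF} do, and what the global conditions on the phase are calibrated for), or a proof that $PP^*$ is a pseudodifferential operator whose symbol lies in a class covered by a Calder\'on--Vaillancourt type theorem (the route of Kumano-go \cite{Ku}). Without one of these, your strategy cannot close.

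A secondary issue: you assert that the graph condition together with the uniform bound on $\partial_y\partial_\xi\varphi$ makes the averaged Hessian $M(y,\xi,\eta)=\int_0^1\partial_y\partial_\xi\varphi(y,\xi+t(\eta-\xi))\,dt$ invertible with bounded inverse. That is not a consequence of those two hypotheses: the average of matrices with determinant bounded away from zero and entries bounded above can be singular (e.g., an average of rotations). Invertibility of $M$ holds when $|\xi-\eta|$ is small relative to $|\xi|$, by continuity, but you need an explicit mechanism to handle the complementary regime, and integration by parts in $\xi$ and $\eta$ alone produces decay in $|x-\partial_\xi\varphi(y,\xi)|$ and $|x'-\partial_\xi\varphi(y,\eta)|$, not in $|\xi-\eta|$, so ``handled separately by $\xi,\eta$-integration by parts alone'' does not close that case as stated. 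You also gloss over the fact that $\rho=\partial_\xi\varphi(y,\eta)-\partial_\xi\varphi(y,\xi)$ is bounded only by $\jp{y}\jp{\xi}^{-1}|\xi-\eta|$, so it is not small uniformly in $y$; controlling it requires tying $\jp{y}$ to the new variable $z$ through the diffeomorphism, which is doable but must be made explicit since the unbounded $\partial_\xi^2\varphi$ is precisely what distinguishes this theorem from the Asada--Fujiwara setting.
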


As for the global $L^p$-boundedness, 
Coriasco and Ruzhansky \cite{CR-CRAS, CR} established the following result
generalising Theorem \ref{Th:RS} to the setting of $L^p$-spaces:

\begin{thm}[\cite{CR}]\label{Th:CR}
Let $\varphi(y,\xi)$ and $a(x,y,\xi)$ be $C^\infty$-functions.
Assume that
 $\varphi(y,\xi)$ is positively homogeneous of order $1$ for large $\xi$
and satisfies \eqref{graphcond}.
Also assume that
\begin{align*}
&\abs{\partial_y^\alpha\partial_\xi^\beta\varphi(y,\xi)} \leq
C_{\alpha\beta}\langle y\rangle^{1-|\alpha|}\jp{\xi}^{1-|\beta|}\quad
(\forall \alpha, \beta),
\\
&\abs{\partial_x^\alpha\partial_y^\beta\partial_\xi^\gamma a(x,y,\xi)}
\leq C_{\alpha\beta\gamma}
\langle x\rangle^{m_1-|\alpha|}
\langle y\rangle^{m_2-|\beta|}
\jp{\xi}^{-(n-1)|1/p-1/2|-|\gamma|}\quad
(\forall \alpha, \beta, \gamma),
\end{align*}
hold for all $x,y,\xi\in\R^n$, and that $a(x,y,\xi)$ vanishes around $\xi=0$.
Then $P$ is $L^p(\R^n)$-bounded, for every $1<p<\infty$, provided that 
$m_1+m_2\leq -n|1/p-1/2|$.
\end{thm}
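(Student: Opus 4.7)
The strategy is to deduce the global $L^p$ estimate from the local $L^p$-boundedness of Seeger--Sogge--Stein \cite{SSS} by a smooth spatial partition of unity in both $x$ and $y$, combining compactly supported local bounds with off-diagonal decay provided by non-stationary phase.

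Fix $\chi\in C_c^\infty(\R^n)$ with $\sum_{k\in\Z^n}\chi(\cdot-k)\equiv 1$ and write $\chi_k(x)=\chi(x-k)$. Decomposing the identity in both variables yields
\[
Pu(x)=\sum_{j,k\in\Z^n}P_{jk}u(x),\qquad P_{jk}u(x)=\chi_k(x)\,P(\chi_j u)(x),
\]
where each $P_{jk}$ is a Fourier integral operator with the same phase $x\cdot\xi-\varphi(y,\xi)$ but amplitude $a_{jk}(x,y,\xi)=\chi_k(x)\chi_j(y)a(x,y,\xi)$ supported in $\{|x-k|\le C,\,|y-j|\le C\}$. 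The assumption on $a$ yields, uniformly in $(j,k)$,
\[
|\partial_x^\alpha\partial_y^\beta\partial_\xi^\gamma a_{jk}(x,y,\xi)|\le C_{\alpha\beta\gamma}\langle k\rangle^{m_1}\langle j\rangle^{m_2}\langle\xi\rangle^{-(n-1)|1/p-1/2|-|\gamma|},
\]
while the assumed bound $|\partial_y^\alpha\partial_\xi^\beta\varphi|\le C_{\alpha\beta}\langle y\rangle^{1-|\alpha|}\langle\xi\rangle^{1-|\beta|}$ ensures that, after extracting the $y$-independent modulation $e^{-i\varphi(j,\xi)}$, the translated phase $\varphi(y+j,\xi)$ lies in a bounded family of admissible phase functions with constants uniform in $j$. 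Applying the local Seeger--Sogge--Stein estimate at the critical order $-(n-1)|1/p-1/2|$ to the translated operator and undoing the translation then yields
\[
\|P_{jk}u\|_{L^p(\R^n)}\le C\,\langle k\rangle^{m_1}\langle j\rangle^{m_2}\,\|\chi_j u\|_{L^p(\R^n)}
\]
with $C$ independent of $(j,k)$.

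The global graph condition \eqref{graphcond} and the homogeneity of $\varphi$ produce a bi-Lipschitz map $\Psi\colon\R^n\to\R^n$ (up to a bounded perturbation) such that $x-\nabla_\xi\varphi(y,\xi)$ is bounded away from zero on $\supp a_{jk}$ whenever $|k-\Psi(j)|\gg 1$. Repeated integration by parts in $\xi$ then upgrades the previous estimate to
\[
\|P_{jk}u\|_{L^p(\R^n)}\le C_N\,\langle k\rangle^{m_1}\langle j\rangle^{m_2}(1+|k-\Psi(j)|)^{-N}\,\|\chi_j u\|_{L^p(\R^n)}
\]
for every $N$, making the operator-valued matrix $(P_{jk})$ essentially banded around $k=\Psi(j)$.

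Using the almost-disjointness of the supports $\supp\chi_k$ one has $\|Pu\|_{L^p}^p\lesssim\sum_k\|\sum_j P_{jk}u\|_{L^p}^p$ and $\sum_j\|\chi_j u\|_{L^p}^p\lesssim\|u\|_{L^p}^p$, so the global bound reduces to $\ell^p$-boundedness of the scalar majorant of the previous display. Its $\ell^1$- and $\ell^\infty$-row/column sums are $\lesssim\langle k\rangle^{m_1+m_2}$ and $\lesssim\langle j\rangle^{m_1+m_2}$ respectively, once the off-diagonal decay absorbs the sum along the banded direction, and a Schur-type interpolation (equivalently, interpolation with the $\ell^2$ case supplied by Theorem \ref{Th:RS}) yields the required $\ell^p$-bound precisely when $m_1+m_2\le -n|1/p-1/2|$, the factor $n|1/p-1/2|$ emerging from the H\"older balance between the $\ell^p$-sum in the near-diagonal direction and the $\ell^{p'}$-control in the transverse direction. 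The main obstacle is the first step: verifying that the Seeger--Sogge--Stein constant is truly uniform in $(j,k)$ after translation. This uniformity hinges on the weighted decay $\langle y\rangle^{1-|\alpha|}$ of $\partial_y^\alpha\varphi$, which ensures that the shifted phases $\varphi(\cdot+j,\xi)$ remain in a precompact family of admissible phases; the vanishing of $a$ near $\xi=0$ disposes of the low-frequency part of each $P_{jk}$ by a standard reduction.
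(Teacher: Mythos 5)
This theorem is not proved in the paper; it is quoted from Coriasco--Ruzhansky~\cite{CR}. The Introduction tells us how that proof goes: like \cite{SSS}, \cite{Mi}, \cite{Su} it reduces to $H^1$--$L^1$ boundedness via the atomic decomposition of $H^1$, then interpolates and dualises. Your tiling-plus-Schur scheme is therefore a genuinely different route, so the question is whether it actually closes.

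It does not, for two related reasons. First, the uniformity of the near-diagonal estimate fails under the hypotheses of this theorem. The assumption is $\abs{\partial_y^\alpha\partial_\xi^\beta\varphi}\le C\jp{y}^{1-|\alpha|}\jp{\xi}^{1-|\beta|}$, so for $\alpha=0$, $|\beta|=2$ we only have $\abs{\partial^2_\xi\varphi(y,\xi)}\lesssim\jp{y}\jp{\xi}^{-1}$; for phases of type (II) such as $\varphi(y,\xi)=y\cdot\psi(\xi)$ this bound is sharp. After translating to $y+j$ with $|y|\le C$, the second $\xi$-derivative of the translated phase grows like $\jp{j}$, so the Seeger--Sogge--Stein constant for the translated operator is \emph{not} uniform in $j$; the precompact family you invoke does not exist. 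Relatedly, the ``extracted modulation'' $e^{i(k\cdot\xi-\varphi(j,\xi))}$ has $\xi$-derivatives of size $|k-\nabla_\xi\varphi(j,\xi)|$, and you would need this bounded \emph{uniformly over $\xi$} to keep the modified amplitude in a bounded set of $S^{\kappa(p)}$; but $\nabla_\xi\varphi(j,\cdot)$ is homogeneous of degree $0$ in $\xi$, not constant, so the set $\Sigma_j:=\{\nabla_\xi\varphi(j,\xi):\xi\ne0\}$ has diameter comparable to $\jp{j}$ for nonlinear $\psi$. Consequently there is no bi-Lipschitz $\Psi:\R^n\to\R^n$ with the banding property you describe; the singular region for a fixed $j$ is a sphere-sized set $\Sigma_j$, not a point.

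Second, this invalidates the counting that makes the Schur step work. The number of $k$ within $O(1)$ of $\Sigma_j$ is $\sim\jp{j}^{n-1}$, and likewise in the transposed direction, so the $\ell^1$/$\ell^\infty$ row/column sums of your majorant matrix are not $\lesssim\jp{k}^{m_1+m_2}$ but can be as large as $\jp{k}^{m_1+m_2+n-1}$. That yields a condition strictly stronger than $m_1+m_2\le -n|1/p-1/2|$ even before any interpolation, and the heuristic ``H\"older balance'' you invoke to produce the factor $n|1/p-1/2|$ is asserted rather than derived. Note also that when $\varphi(y,\xi)=y\cdot\xi+\rho(\xi)$ (the type-(I) phase), $\Sigma_j$ does have bounded diameter, your banding and uniform-SSS picture \emph{is} correct, and the outcome is $m_1+m_2\le0$ with no loss --- but that is the content of the present paper's Theorem~\ref{main}/Corollary~\ref{Cor1}, not of Theorem~\ref{Th:CR}, which precisely covers the case where $\partial^2_\xi\varphi$ grows in $y$.
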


In Theorem \ref{Th:CR}, the decay of order $-n|1/p-1/2|$ is required
for amplitude functions in space variables. It is also shown in 
\cite{CR} that this order of decay is in general sharp: otherwise it is possible
to construct an example of an operator that is not globally bounded
on $L^p(\R^n)$. 
Thus, in the space $L^p(\R^n)$, in addition to the local loss of regularity of order  
$(n-1)|1/p-1/2|$, there is also the global loss of weight at infinity of order 
$n|1/p-1/2|$, and both of these losses are in general sharp.
It is possible to improve the order of the weight loss a bit 
to the order $(n-1)|1/p-1/2|$
in a special case
of so-called SG-Fourier integral operators, namely, for operators
$$
A u(x)=
\int_{\R^n} e^{i\varphi(x,\xi)}a(x,\xi)\widehat{u}(\xi)\, d\xi,
$$
with amplitudes satisfying
$$
\abs{\partial_x^\alpha\partial_\xi^\gamma a(x,\xi)}
\leq C_{\alpha\gamma}\langle x\rangle^{-(n-1)|1/p-1/2|-|\alpha|}
\jp{\xi}^{-(n-1)|1/p-1/2|-|\gamma|}\quad
(\forall \alpha, \gamma).
$$
If the function $a(x,\xi)$ vanishes near $\xi=0$, such operators are
$L^p(\R^n)$-bounded,
see \cite[Theorem 2.6]{CR} for the precise formulation.

However, despite the mentioned counter-example to the decay order in space
variables, it may be natural to expect some further better properties,
particularly for phase functions whose second derivatives
$\partial_\xi\partial_\xi\varphi$ in $\xi$ are bounded like in the case (I).
For example, for the special case of convolution operators given by
\[
T u(x)=
\int_{\R^n}\int_{\R^n} e^{i((x-y)\cdot\xi-\varphi(\xi))}a(\xi)u(y)\, dy d\xi,
\]
where $\varphi\in C^\infty(\R^n)$
is positively homogeneous of order $1$ for large $\xi$
and $a\in C^\infty(\R^n)$ satisfies
\[
|\partial^\alpha a(\xi)|\leq C_\alpha\jp{\xi}^{\kappa-|\alpha|},
\]
Miyachi \cite{Mi} showed that for $1<p<\infty$, the operator
$T$ is $L^p(\R^n)$-bounded if
$\kappa\leq -(n-1)|1/p-1/2|$
under the assumptions that $\varphi>0$ and that the compact hypersurface
\[
\Sigma=\b{\xi\in\R^n\setminus0\,:\,\varphi(\xi)=1}
\]
has non-zero Gaussian curvature.
Beals \cite{Be} and Sugimoto \cite{Su} discussed the case when
$\Sigma$ might have vanishing Gaussian curvature but is still convex.
But the local $L^p$-boundedness result by Seeger, Sogge and Stein \cite{SSS}
suggests that it could be possible to remove any geometric condition on $\Sigma$.
And indeed, in this paper we establish the following generalised result:
\begin{thm}\label{main}
Let $\varphi(y,\xi)$ and $a(x,y,\xi)$ be $C^\infty$-functions.
Assume that
 $\varphi(y,\xi)$ is positively homogeneous of order $1$ for large $\xi$
and satisfies \eqref{graphcond}.
Also assume that
\begin{align*}
&\abs{\partial_y^\alpha\partial_\xi^\beta(y\cdot\xi-\varphi(y,\xi))}
\le
C_{\alpha\beta}\jp{\xi}^{1-|\beta|}\quad
(\forall\, \alpha, |\beta|\geq1),
\\
&\abs{\partial_x^\alpha\partial_y^\beta\partial_\xi^\gamma a(x,y,\xi)}
\leq C_{\alpha\beta\gamma}
\jp{\xi}^{ -(n-1)|1/p-1/2|-|\gamma|}\quad
(\forall \alpha, \beta, \gamma),
\end{align*}
hold for all $x,y,\xi\in\R^n$.
Then $P$ is $L^p(\R^n)$-bounded, for every $1<p<\infty$.
\end{thm}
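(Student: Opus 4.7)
The plan is to reduce the global $L^p$-boundedness of $P$ to the local theorem of Seeger--Sogge--Stein by decomposing $u$ via a unit-scale partition of unity in $y$ and combining a uniform (in the translation parameter) local bound with off-diagonal kernel decay from non-stationary phase. The crucial structural observation is that the hypotheses are translation-invariant: setting $\tilde\varphi(z,\xi):=\varphi(z+k,\xi)-k\cdot\xi$ and $\tilde a_k(w,z,\xi):=a(w+k,z+k,\xi)$ for $k\in\Z^n$, the phase condition on $y\cdot\xi-\varphi(y,\xi)$ transfers verbatim to $z\cdot\xi-\tilde\varphi(z,\xi)$, and the amplitude bounds on $\tilde a_k$ hold with constants independent of $k$. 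The extra term $-k\cdot\xi$ in $\tilde\varphi$ is precisely what is needed so that an $x$-translation of the target balances a $y$-translation of the source, yielding a new FIO belonging to the same symbol class.

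Fix a partition of unity $\sum_{k\in\Z^n}\chi_k=1$ with $\chi_k(y)=\chi_0(y-k)$ for a fixed $\chi_0$ supported in the unit ball, and write $Pu=\sum_k P_ku$ with $P_ku:=P(\chi_ku)$. After translation $y=z+k$, $x=w+k$, the operator $P_k$ becomes $\widetilde P_k v_k(w):=P_ku(w+k)$ where $v_k(z):=u(z+k)\chi_0(z)$ and $\widetilde P_k$ is the FIO with phase $\tilde\varphi$ and amplitude $\tilde a_k$ (now compactly supported in $z$). The Seeger--Sogge--Stein local $L^p$ theorem applied to $\widetilde P_k$ on a ball $B(0,R)$ yields $\|\widetilde P_k v_k\|_{L^p(B(0,R))}\le C\|v_k\|_{L^p}$ with $C$ and $R$ independent of $k$, thanks to the uniform symbol estimates noted above. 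For $|w|\ge R$ and $z\in\supp\chi_0$, the phase $w\cdot\xi-\tilde\varphi(z,\xi)$ is non-stationary in $\xi$ since $\nabla_\xi\tilde\varphi(z,\xi)=z+O(1)$ by the hypothesis with $|\beta|=1$; repeated integration by parts in $\xi$, using that higher $\xi$-derivatives of $\tilde\varphi$ are of order $\jp{\xi}^{1-|\beta|}$, produces the kernel bound $|\widetilde K_k(w,z)|\le C_N\jp{w}^{-N}$ uniformly in $k$, whence $|P_ku(x)|\le C_N\jp{x-k}^{-N}\|\chi_ku\|_{L^p}$ for $|x-k|\ge R$.

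Decomposing each $P_ku$ into its restriction to $B(k,R)$ and its tail, the local pieces have bounded overlap, so $\|\sum_k \mathbf{1}_{B(k,R)}P_ku\|_{L^p}^p\lesssim\sum_k\|\chi_ku\|_{L^p}^p\lesssim\|u\|_{L^p}^p$ by the bounded overlap of $\supp\chi_k$. The tails are controlled pointwise by the discrete convolution $\sum_k\jp{x-k}^{-N}\|\chi_ku\|_{L^p}$, which for $N>n$ is bounded in $L^p(\R^n)$ via Young's inequality against $\jp{\cdot}^{-N}\in L^1(\R^n)$, delivering the required $L^p$-bound. I expect the main obstacle to be verifying the uniformity in $k$ of the Seeger--Sogge--Stein constants: this amounts to exhibiting the shifted phase and amplitude as members of a single fixed symbol class, which is the content of the translation-invariance computation but requires careful tracking of each seminorm through the shift, and in particular exploits the fact that the hypothesis is imposed on $y\cdot\xi-\varphi(y,\xi)$ rather than on $\varphi(y,\xi)$ itself, so that the only $y$-linear growth in $\xi$ is precisely the term $y\cdot\xi$ that the shift absorbs.
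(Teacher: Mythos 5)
Your proposal is correct and takes a genuinely different route from the paper. The paper reduces the statement to the $H^1$-$L^1$ bound for the critical order $-(n-1)/2$ via complex interpolation with a global $L^2$ bound (taken from Asada--Fujiwara) and duality, then runs the atomic decomposition of $H^1$: for an atom of radius $r$ the far region is controlled by a non-stationary-phase kernel estimate, while the near region --- a ball of radius $\sim\max(r,1)$ --- is controlled by Cauchy--Schwarz against the global $L^2$ bound when $r\geq 1$, and by the translated local Seeger--Sogge--Stein $H^1$-$L^1$ bound when $r\leq 1$. Your argument works directly in $L^p$: the unit-scale partition of unity in the source variable forces every local piece onto a ball of fixed radius $R$, so the translated local SSS $L^p$ bound covers them without any global $L^2$ input, and the tails are summed by discrete Young's inequality against $\jp{\cdot}^{-N}$. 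This removes the interpolation/duality step and the Asada--Fujiwara ingredient entirely, at the cost of not producing the $H^1$-$L^1$ and $L^\infty$-$BMO$ endpoint estimates that the paper obtains as by-products of its route. Both proofs ultimately rest on the same two technical lemmas --- translation invariance of the hypothesis on $y\cdot\xi-\varphi(y,\xi)$ (rather than on $\varphi$ itself) and a non-stationary-phase kernel bound off the diagonal --- and both rely on the uniformity in the translation parameter of the SSS constant, which the paper itself flags as requiring a careful re-tracing of the SSS argument to express the operator norm through finitely many translation-invariant seminorms of phase and amplitude; you would need the analogous bookkeeping for their $L^p$ statement. One small point you should make explicit: $\varphi$ is homogeneous only for large $\xi$, so, as in the paper's proof of Corollary \ref{Cor1}, one should split off the low-frequency part of the amplitude by a cutoff $g(\xi)$ and treat that piece as a separate operator with uniformly integrable kernel before invoking the SSS theorem, which is stated for phases homogeneous away from $\xi=0$.
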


Compared to Theorem \ref{Th:CR}, the assumptions on the phase 
$\varphi(y,\xi)$ as in Theorem \ref{main} ensure that no decay of the
amplitude $a(x,y,\xi)$ in the space variables is needed for the operator $P$
to be globally bounded on $L^p(\R^n)$.

\medskip
Theorem \ref{main} together with some related results
will be restated in Section \ref{S2} in a different form
(in particular, Theorem \ref{main} follows from Corollary \ref{Cor1}), emphasising a 
general construction for deducing global results from the local ones.
We now briefly explain the strategy of the global proof.
By the interpolation and the duality,
the problem is reduced to show the
$H^1$-$L^1$-boundedness. 
To show the $H^1$-$L^1$-boundedness, we use the atomic decomposition of $H^1$: 
$$
f=\sum_{j=1}^\infty\lambda_jg_j,\quad\lambda_j\in\C,\quad g_j: \textrm{ atom}.
$$
Here we call a function $g$
on $\R^n$ an atom if there is a ball $B=B_g\subset\R^n$ such that
$\supp g\subset B$, $\|g\|_{L^\infty}\leq|B|^{-1}$ and
$\int g(x)\,dx=0$.
This is the common starting point which is also used to show the known results
\cite{SSS}, \cite{Mi}, \cite{Su}, \cite{CR}.
Modifying these results is not so straightforward but we present 
a new argument which allows to deduce the estimate for large atoms ($|B|\geq1$) from
the global $L^2$-boundedness, and for small atoms ($|B|\leq1$) from the local
$L^p$-boundedness.
More details and proofs are given in Sections \ref{S3} and \ref{S4}.

\medskip
In Section \ref{S5} we give applications of the obtained results to global $L^p$-estimates
for solutions of Cauchy problems for hyperbolic partial differential equations.
In \cite{CR}, the global $L^p$-boundedness of solutions of such equations was established
with a loss of weight at infinity. In Theorem \ref{THM:hyp1} we show that this weight loss
can be eliminated.

\medskip
To complement some references on the local and global boundedness properties of Fourier integral operators,
we refer to 
the authors' paper \cite{RS-weighted:MN} for the weighted $L^2$- and to
Dos Santos Ferreira and Staubach \cite{Staubach-Dos-Santos-Ferreira:local-global-FIOs-MAMS}
for other weighted properties of Fourier integral operators, to
Rodr\'iguez-L\'opez and Staubach \cite{Rodriguez-Lopez-Staubach:rough-FIOs} for estimates for
rough Fourier integral operators, to \cite{Ruzhansky:CWI-book} for
$L^p$-estimates for Fourier integral operators with complex phase functions, as well as to
\cite{Ruzhansky:FIOs-local-global} for an earlier overview of local and global properties of
Fourier integral operators with real and complex phase functions.
$L^{p}$-boundedness of bilinear Fourier integral operators has been also investigated,
see Hong, Lu, Zhang \cite{GLU} and references therein.

\medskip
In this paper we often abuse the notation slightly by writing, for example,
$a(x,y,\xi)\in C^\infty$ instead of $a\in C^\infty$, to emphasise the dependence
on particular sets of variables. We will also often write $\partial_\xi$ for
$\nabla_\xi$.

\section{Main results}
\label{S2}

Let $\mathcal P$ be a Fourier integral operator of the form
\begin{equation}\label{FIOp}
\begin{aligned}
&\mathcal{P} u(x)=
\int_{\R^n}\int_{\R^n} e^{i\phi(x,y,\xi)}a(x,y,\xi)u(y)
\, dy d\xi\quad (x\in\R^n),
\\
&\phi(x,y,\xi)=(x-y)\cdot\xi+\Phi(x,y,\xi),
\end{aligned}
\end{equation}
where $\Phi(x,y,\xi)$ is introduced just for convenience and 
we do not lose any generality with this notation.
We introduce a class for the amplitude $a(x,y,\xi)$.
\begin{defn}\label{symbol}
For $\kappa\in\R$, $S^\kappa$ denotes
the class of smooth functions
$a=a(x,y,\xi)\in C^\infty(\R^n\times\R^n\times\R^n)$
satisfying the estimate
\[
\abs{
\partial^\alpha_x \partial^\beta_y \partial^\gamma_\xi a(x,y,\xi)
}
\leq C_{\alpha\beta\gamma}\jp{\xi}^{\kappa-|\gamma|}
\]
for all $x,y,\xi\in\R^n$ and all multi-indices $\alpha,\beta,\gamma$.
\end{defn}
We remark that the formal adjoint $\mathcal P^*$ of $\mathcal P$ is
of the same form \eqref{FIOp} with the replacement
\begin{equation}\label{adjoint}
\begin{aligned}
&\Phi(x,y,\xi)\longmapsto\Phi^*(x,y,\xi)=-\Phi(y,x,\xi),
\\
&a(x,y,\xi)\longmapsto a^*(x,y,\xi)=\overline{a(y,x,\xi)},
\end{aligned}
\end{equation}
and $a\in S^\kappa$ is equivalent to $a^*\in S^\kappa$.

\medskip
We also introduce a notion of the local boundedness of $\mathcal P$.
By $\chi_K$ we denote the multiplication
by the characteristic function of the set $K\subset\R^n$.
\begin{defn}
We say that the operator $\mathcal P$
is $H^1_{comp}(\R^n)$-$L^1_{loc}(\R^n)$-bounded if the localised operator
$\chi_{K} \mathcal P \chi_{K}$ is $H^1(\R^n)$-$L^1(\R^n)$-bounded
for any compact set $K\subset\R^n$.
Furthermore, if the operator norm of $\chi_{K_h} \mathcal P \chi_{K_h}$
is  bounded in $h\in\R^n$ for the translated set $K_h=\{x+h:x\in K\}$
of any compact set $K\subset\R^n$,
we say that
the operator $\mathcal P$
is uniformly $H^1_{comp}(\R^n)$-$L^1_{loc}(\R^n)$-bounded.
\end{defn}
If we introduce the translation operator $\tau_h:f(x)\mapsto f(x-h)$ and
its inverse (formal adjoint) $\tau_h^*=\tau_{-h}$,
we have the expression $\chi_{K_h}=\tau_h\chi_{K}\tau_h^*$.
Since $L^1$ and $H^1$ norms are translation invariant,
$\mathcal P$
is uniformly $H^1_{comp}$-$L^1_{loc}$-bounded
if and only if $\chi_{K}(\tau_h^* \mathcal P \tau_h)\chi_{K}$ is $H^1$-$L^1$-bounded
for any compact set
$K\subset\R^n$ and the operator norm is bounded in $h\in\R^n$.
We remark that the operator $\tau_h^*\mathcal{P}\tau_h$ is of the form
\eqref{FIOp} with the replacements
\begin{equation}\label{translation}
\begin{aligned}
&\Phi(x,y,\xi)\longmapsto\Phi^h(x,y,\xi)=\Phi(x+h,y+h,\xi),
\\
&a(x,y,\xi)\longmapsto a^h(x,y,\xi)=a(x+h,y+h,\xi).
\end{aligned}
\end{equation}
\par
Now we are ready to state our main principle:
\begin{thm}\label{Th:main}
Assume the following conditions:
\begin{itemize}
\item[(A1)]
$\Phi(x,y,\xi)$ is a real-valued $C^\infty$-function
and $\partial^\gamma_\xi\Phi(x,y,\xi)\in S^0$
for $|\gamma|=1$.
\item[(A2)]
$\mathcal P$ and $\mathcal P^*$ 
are $L^2(\R^n)$-bounded if $a(x,y,\xi)\in S^{0}$.
\item[(A3)]
$\mathcal P$ and $\mathcal P^*$ are uniformly
$H^1_{comp}(\R^n)$-$L^1_{loc}(\R^n)$-bounded
if $a(x,y,\xi)\in S^{-(n-1)/2}$.
\end{itemize}
Then $\mathcal P$ is $L^p(\R^n)$-bounded
if $1<p<\infty$, $\kappa\leq-(n-1)|1/p-1/2|$, and $a(x,y,\xi)\in S^\kappa$.
\end{thm}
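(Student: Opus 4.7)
My strategy is to reduce the full range of $L^p$ estimates to the single endpoint claim that $\mathcal P\colon H^1(\R^n)\to L^1(\R^n)$ whenever $a\in S^{-(n-1)/2}$. Combined with (A2), Stein's complex interpolation for analytic families handles $1<p\leq 2$: writing $\theta=2(1/p-1/2)$, the family $a_z=a\,\jp{\xi}^{-(n-1)(z-\theta)/2}$ lies in $S^0$ for $\Re z=0$ and in $S^{-(n-1)/2}$ for $\Re z=1$, the associated operators depend analytically on $z$, and their symbol seminorms grow only polynomially in $|\Im z|$, so that at $z=\theta$ one recovers $\mathcal P$ as an $L^p$-bounded operator. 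The range $2\leq p<\infty$ follows by duality, since by \eqref{adjoint} the adjoint $\mathcal P^*$ is a Fourier integral operator of the same form with amplitude in the same symbol class.

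The main task is therefore the $H^1\to L^1$ bound. I would use the atomic decomposition $f=\sum_j\lambda_j g_j$ with $\sum_j|\lambda_j|\lesssim\|f\|_{H^1}$ and reduce to showing $\|\mathcal P g\|_{L^1(\R^n)}\le C$ uniformly over atoms $g$ supported in balls $B=B(x_B,r)$. Fix a radius $R_0$ sufficiently large in terms of $\sup|\nabla_\xi\Phi|$, which is finite by (A1), and split $\R^n=N_g\cup F_g$ with the near region $N_g=\{x:|x-x_B|\le R_0\max(1,r)\}$. For \emph{small atoms} ($r\leq 1$), $N_g$ is a translate of the fixed compact ball $B(0,R_0)$, so the uniform $H^1_{comp}$-$L^1_{loc}$ bound provided by (A3) yields $\|\mathcal P g\|_{L^1(N_g)}\le C\|g\|_{H^1}\le C$ uniformly in $x_B$. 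For \emph{large atoms} ($r\geq 1$), using $\|g\|_{L^2}\le|B|^{-1/2}\lesssim r^{-n/2}$, Cauchy--Schwarz together with the global $L^2$-boundedness from (A2) (note $S^{-(n-1)/2}\subset S^0$) gives
\[
\|\mathcal P g\|_{L^1(N_g)}\le|N_g|^{1/2}\|\mathcal P g\|_{L^2(\R^n)}\le Cr^{n/2}\|g\|_{L^2}\le C.
\]

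It remains to handle the far region. By the choice of $R_0$, for $x\in F_g$ and $y\in B$ we have $|x-y|\ge\tfrac12|x-x_B|\ge\tfrac12 R_0\max(1,r)$, and (A1) then gives $|\nabla_\xi\phi|=|x-y+\nabla_\xi\Phi|\gtrsim|x-y|$. Repeated integration by parts in $\xi$ via $L=\tfrac{\nabla_\xi\phi\cdot\nabla_\xi}{i|\nabla_\xi\phi|^2}$ then yields, for any integer $N$,
\[
|\mathcal P g(x)|\le C_N|x-x_B|^{-N}\|g\|_{L^1}\le C_N|x-x_B|^{-N},
\]
and choosing $N>n$ gives a uniform bound on $\|\mathcal P g\|_{L^1(F_g)}$. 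The delicate step is the book-keeping in this integration by parts: the linear part of $\phi$ contributes nothing to $\xi$-derivatives of $\nabla_\xi\phi/|\nabla_\xi\phi|^2$, so every such derivative falls on $\nabla_\xi\Phi$ and produces a factor in $S^{-1}$; consequently each application of $L^*$ gains both a factor $|x-y|^{-1}$ and a factor $\jp{\xi}^{-1}$, and combined with the initial decay $\jp{\xi}^{-(n-1)/2}$ of $a$ this makes the $\xi$-integral absolutely convergent once $N>(n+1)/2$. Summing the near and far contributions completes the per-atom estimate, and hence the theorem.
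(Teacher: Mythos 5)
Your proposal is correct and takes essentially the same route as the paper: reduction to the $H^1$-$L^1$ endpoint by duality and analytic interpolation in the symbol order against (A2), followed by an atomic decomposition with a near/far dichotomy in which the near region is handled by (A3) for small atoms and by (A2) plus Cauchy--Schwarz for large atoms, and the far region by repeated integration by parts exploiting that $|\nabla_\xi\phi|\gtrsim|x-y|$ there. The paper packages the near/far split via the auxiliary functions $H(x,y,z)=\inf_\xi|z+\partial_\xi\Phi|$ and $\widetilde H(z)=\inf_{x,y}H(x,y,z)$ and the sets $\Delta_d$, $\widetilde\Delta_d$ (Lemmas 3.1--3.4) rather than your explicit constant $R_0$, but the content and the per-atom bookkeeping are the same.
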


We remark that assumptions (A1) and (A2) are essentially the requirements
for phase functions $\Phi(x,y,\xi)$,
and a condition for (A2) is given by Asada and Fujiwara \cite{AF} or by the authors
\cite{RS}, while (A3) is given by Seeger, Sogge and Stein \cite{SSS}.
These conditions will be discussed in Section \ref{S4}, and here we simply state
the final conclusion by
restricting our phase functions to the form
\[
\phi(x,y,\xi)=x\cdot\xi-\varphi(y,\xi)
\quad(\text{in other words
$\Phi(x,y,\xi)=y\cdot\xi-\varphi(y,\xi)$}).
\]
We make precise the notion of homogeneity:

\begin{defn}\label{Def:hom}
We say that
{\it $\varphi(y,\xi)$ is positively homogeneous of order $1$} if
\begin{equation}\label{EQ:hom}
\varphi(y,\lambda\xi)=\lambda\varphi(y,\xi)
\end{equation}
holds for all $y\in\R^n$,
$\xi\neq0$ and $\lambda>0$.
We also say that 
{\it $\varphi(y,\xi)$ is positively homogeneous of order $1$
for large $\xi$}
if there exist a constant $R>0$ such that \eqref{EQ:hom}
holds for all $y\in\R^n$,
$|\xi|\geq R$ and $\lambda\geq1$.
\end{defn}
For the operator of the form
\begin{equation}\label{FIOp1}
P u(x)=
\int_{\R^n}\int_{\R^n} e^{i(x\cdot\xi-\varphi(y,\xi))}a(x,y,\xi)u(y)\, dy d\xi
\quad (x\in\R^n)
\end{equation}
we have:
\begin{cor}\label{Cor1}
Assume the following conditions:
\begin{itemize}
\item[(B1)]
$\varphi(y,\xi)$ is a real-valued $C^\infty$-function
and $\partial^\gamma_\xi(y\cdot\xi-\varphi(y,\xi))\in S^0$
for $|\gamma|=1$.
\item[(B2)]
There exists a constant $C>0$ such that
$\abs{\det \partial_y\partial_\xi\varphi(y,\xi)}\geq C$
for all $y,\xi\in\R^n$.
\item[(B3)]
$\varphi(y,\xi)$ is positively homogeneous
of order $1$ for large $\xi$.
\end{itemize}
Then $P$ is $L^p(\R^n)$-bounded
if $1<p<\infty$, $\kappa\leq-(n-1)|1/p-1/2|$ and $a(x,y,\xi)\in S^\kappa$.
\end{cor}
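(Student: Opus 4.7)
The plan is to derive Corollary \ref{Cor1} from Theorem \ref{Th:main} by verifying that hypotheses (B1)--(B3) imply the three assumptions (A1), (A2), (A3) of Theorem \ref{Th:main} for the operator $P$ in \eqref{FIOp1}, viewed as $\mathcal{P}$ with phase $\phi(x,y,\xi) = x\cdot\xi - \varphi(y,\xi)$, so that $\Phi(x,y,\xi) = y\cdot\xi - \varphi(y,\xi)$.

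Hypothesis (A1) is immediate, being the verbatim content of (B1). For (A2), I would invoke the Asada--Fujiwara theorem (Theorem \ref{Th:AF}). Direct computation with $\phi(x,y,\xi) = x\cdot\xi - \varphi(y,\xi)$ gives
\[
D(\phi) = \begin{pmatrix} 0 & I \\ -\partial_y\partial_\xi\varphi & -\partial_\xi^2\varphi \end{pmatrix},
\qquad
|\det D(\phi)| = |\det\partial_y\partial_\xi\varphi| \geq C,
\]
the inequality coming from (B2); and (B1), through repeated differentiation of $y-\partial_\xi\varphi \in S^0$, ensures that every entry of $D(\phi)$ together with all its derivatives is bounded on $\R^n\times\R^n\times\R^n$. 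Combined with $a\in S^0$, these are precisely the hypotheses of Theorem \ref{Th:AF}, so $\mathcal{P}$ is $L^2$-bounded; $\mathcal{P}^*$ is then $L^2$-bounded either by duality or by the analogous Asada--Fujiwara check for the adjoint phase $\phi^*(x,y,\xi) = \varphi(x,\xi) - y\cdot\xi$, for which the invariance of (B1)--(B2) under renaming $y\leftrightarrow x$ supplies the same hypotheses.

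The central step is (A3), uniform $H^1_{comp}$-$L^1_{loc}$-boundedness of $\mathcal{P}$ and $\mathcal{P}^*$ when $a\in S^{-(n-1)/2}$. By the translation equivalence recorded in Section \ref{S2}, it suffices to bound $\chi_K(\tau_h^*\mathcal{P}\tau_h)\chi_K$ uniformly in $h\in\R^n$ for every compact $K\subset\R^n$. By \eqref{translation}, this operator has phase $\Phi^h(x,y,\xi) = (y+h)\cdot\xi - \varphi(y+h,\xi)$ and amplitude $a^h(x,y,\xi) = a(x+h,y+h,\xi)$, both living on the fixed compact set $K\times K$ in the $(x,y)$ variables. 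To this compactly supported operator I would apply the local $H^1$-$L^1$ theorem of Seeger, Sogge and Stein \cite{SSS}. The decisive point is that the constants produced by \cite{SSS} depend only on the symbol-type seminorms of the phase and amplitude on the support, on the graph constant, and on the homogeneity data of the phase in $\xi$; each of these quantities is translation invariant in $(x,y)$ and is uniformly controlled by (B1), (B2), (B3) together with the $S^{-(n-1)/2}$ estimates on $a$. Hence the SSS constants are uniform in $h$, and the adjoint $\mathcal{P}^*$ is handled by the same reasoning applied to $\phi^*$, which inherits the corresponding uniform bounds from the $x\leftrightarrow y$ symmetry of the hypotheses. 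This establishes (A3).

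With (A1)--(A3) in hand, Theorem \ref{Th:main} delivers the $L^p$-boundedness of $P$ for $1<p<\infty$ and $\kappa \leq -(n-1)|1/p-1/2|$. The main obstacle I anticipate is precisely the uniformity in $h$ in (A3): rather than citing \cite{SSS} as a black box, one must inspect its proof --- which proceeds through the atomic decomposition of $H^1$ and a dyadic second-microlocal decomposition of the phase --- to confirm that every implicit constant depends only on the translation-invariant seminorms controlled by (B1)--(B3), and not on the specific location of the support $K_h$. This careful bookkeeping of constants forms the sole substantive input beyond Theorem \ref{Th:main} itself.
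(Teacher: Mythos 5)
Your reduction to Theorem \ref{Th:main} and the verification of (A1) and (A2) are essentially the paper's own argument: $D(\phi)$ for $\phi=x\cdot\xi-\varphi(y,\xi)$ has $\det D(\phi)=\det\partial_y\partial_\xi\varphi$, and (B1) controls the remaining entries, so Asada--Fujiwara (Theorem \ref{Th:AF}) applies. That part is fine.

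The gap is in (A3). You invoke the Seeger--Sogge--Stein $H^1_{comp}$-$L^1_{loc}$ theorem as if it applied directly to $\varphi$, tracking only that the relevant seminorms are translation invariant. But (B3) only gives positive homogeneity of $\varphi(y,\xi)$ for \emph{large} $\xi$, whereas the SSS theorem requires the phase to be smooth on $\R^n\times(\R^n\setminus 0)$ and positively homogeneous of degree $1$ for \emph{all} $\xi\neq 0$; without that, the Littlewood--Paley/second-microlocal rescaling at the heart of their proof does not go through and the seminorms you cite (involving $|\xi|^{-(1-|\gamma|)}$ rather than $\jp{\xi}^{-(1-|\gamma|)}$) need not be finite. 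The paper resolves this by splitting $a=ag+a(1-g)$ with a cutoff $g\in C^\infty_0$ equal to $1$ near $\xi=0$: the high-frequency piece $a(1-g)$ is supported where $\varphi$ is genuinely homogeneous, so SSS applies uniformly in $h$ via the translation-invariant quantities $M_\ell$, $N_\ell$; the low-frequency piece $ag$ is treated separately by integration by parts, giving a kernel bounded by $C(1+|x-y|^2)^{-n}$ and hence $L^1$-boundedness directly. Your proof skips this split entirely, so the application of \cite{SSS} is unjustified on the region $|\xi|\lesssim 1$. A secondary, smaller point: your appeal to ``$x\leftrightarrow y$ symmetry'' for $\mathcal{P}^*$ is loose, since $\phi^*(x,y,\xi)=\varphi(x,\xi)-y\cdot\xi$ is an operator of the opposite handedness; the paper instead traces the argument in \cite{St2} for that form, and this is where (B2) is genuinely used in the (A3) step.
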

We can admit positively homogeneous phase functions
which might have singularity
at the origin for a special kind of operators of the form
\begin{equation}\label{FIOp2}
\mathcal T u(x)=\int_{\R^n} e^{i(x\cdot\xi+\psi(\xi))}a(x,\xi)\widehat{u}(\xi)
\, d\xi\quad (x\in\R^n).
\end{equation}
For such operators we have
\begin{cor}\label{Cor2}
Let $1<p<\infty$ and let $\kappa\leq-(n-1)|1/p-1/2|$.
Assume that $a=a(x,\xi)\in S^\kappa$ and
that $\psi=\psi(\xi)$ is a real-valued $C^\infty$-function on $\R^n\setminus0$
which is positively homogeneous
of order $1$.
Then $\mathcal T$ is $L^p(\R^n)$-bounded.
\end{cor}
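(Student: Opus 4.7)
The plan is to deduce Corollary \ref{Cor2} from Corollary \ref{Cor1} by decomposing $\mathcal T$ according to the size of the frequency variable $\xi$, isolating the singularity of $\psi$ at $\xi = 0$. Fix a cutoff $\chi_0\in C_c^\infty(\R^n)$ with $\chi_0\equiv 1$ on $\{|\xi|\leq 1\}$ and $\supp\chi_0\subset\{|\xi|\leq 2\}$, and split $\mathcal T = \mathcal T_0 + \mathcal T_1$, where $\mathcal T_j$ is defined as in \eqref{FIOp2} with amplitude $\chi_0(\xi)a(x,\xi)$ for $j=0$ and $(1-\chi_0(\xi))a(x,\xi)$ for $j=1$.

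For the high-frequency part $\mathcal T_1$, I would replace $\psi$ by $\tilde\psi(\xi) := (1-\chi_0(\xi))\psi(\xi)$, which belongs to $C^\infty(\R^n)$, agrees with $\psi$ on $\supp(1-\chi_0)$, and is still positively homogeneous of order $1$ for large $\xi$. Then $\mathcal T_1$ has the form \eqref{FIOp1} with $\varphi(y,\xi) = y\cdot\xi - \tilde\psi(\xi)$ and $y$-independent amplitude $(1-\chi_0(\xi))a(x,\xi) \in S^\kappa$. Conditions (B2) and (B3) of Corollary \ref{Cor1} are immediate since $\partial_y\partial_\xi\varphi = I$ and $\varphi$ is positively homogeneous of order $1$ at infinity, while for (B1) we observe that $\partial_\xi^\gamma(y\cdot\xi - \varphi) = \partial_\xi^\gamma\tilde\psi$ is smooth and homogeneous of degree $1-|\gamma|$ for large $\xi$, so it lies in $S^0$. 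Corollary \ref{Cor1} then yields the $L^p$-bound for $\mathcal T_1$.

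For the low-frequency part $\mathcal T_0$, I would factor it as the composition $\mathcal T_0 u = a(x,D)(m(D)u)$, where $m(\xi) = \chi_0(\xi)e^{i\psi(\xi)}$ and $a(x,D)u(x) = \int e^{ix\cdot\xi}a(x,\xi)\widehat u(\xi)\,d\xi$. Since $a\in S^\kappa$ with $\kappa\leq 0$ satisfies the standard H\"ormander symbol estimates uniformly in $x$, $a(x,D)$ is $L^p$-bounded for $1<p<\infty$ by the classical pseudodifferential calculus. For the multiplier $m(D)$, I would verify the Mikhlin--H\"ormander condition $|\xi|^{|\alpha|}|\partial^\alpha m(\xi)|\leq C_\alpha$: because $\partial^\alpha \psi$ is homogeneous of degree $1-|\alpha|$ on $\R^n\setminus 0$, a Fa\`a di Bruno expansion gives $|\partial^\alpha e^{i\psi(\xi)}|\leq C_\alpha |\xi|^{1-|\alpha|}$ for $|\alpha|\geq 1$; combined with the compact support of $\chi_0$, multiplication by $|\xi|^{|\alpha|}$ produces the required uniform bound. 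Hence $m(D)$ is $L^p$-bounded, and so is $\mathcal T_0$ by composition.

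The principal subtlety lies in the Mikhlin check at $\xi=0$: although $\partial^\alpha\psi$ itself blows up like $|\xi|^{1-|\alpha|}$ near the origin, in the Fa\`a di Bruno expansion of $\partial^\alpha e^{i\psi}$ the dominant term contains only a single factor of $\partial^\alpha\psi$, so the oscillatory factor $e^{i\psi}$ absorbs the singularity to precisely the order needed. Everything else --- the high-frequency reduction to Corollary \ref{Cor1} and the $L^p$-boundedness of $a(x,D)$ --- is routine.
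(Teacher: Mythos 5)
Your decomposition $\mathcal T=\mathcal T_0+\mathcal T_1$ and the two arguments --- Corollary \ref{Cor1} for the high-frequency piece, and a factorisation into a pseudodifferential operator composed with a Fourier multiplier for the low-frequency piece --- are exactly what the paper does (the paper invokes Kumano-go--Nagase for $a(X,D_x)$ and the Marcinkiewicz multiplier theorem rather than Mikhlin--H\"ormander, but these are interchangeable here, and your Fa\`a di Bruno verification of $|\partial^\alpha(\chi_0 e^{i\psi})|\lesssim|\xi|^{-|\alpha|}$ is correct).

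There is, however, one genuine flaw in your high-frequency regularization. With $\chi_0\equiv1$ on $\{|\xi|\leq1\}$ and $\supp\chi_0\subset\{|\xi|\leq2\}$, the function $\tilde\psi=(1-\chi_0)\psi$ does \emph{not} agree with $\psi$ on $\supp(1-\chi_0)$: on the annulus $\{1<|\xi|<2\}$ both $\chi_0$ and $1-\chi_0$ can be nonzero, so there $\tilde\psi\neq\psi$ while the amplitude $(1-\chi_0)a$ need not vanish. Replacing $\psi$ by this $\tilde\psi$ therefore changes $\mathcal T_1$, and the identification of $\mathcal T_1$ with an operator of the form \eqref{FIOp1} with phase $y\cdot\xi-\tilde\psi(\xi)$ fails as written. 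The fix is immediate: regularize the phase with a cutoff supported strictly inside $\{|\xi|\leq1\}$, e.g.\ set $\tilde\psi:=(1-\chi_1)\psi$ where $\chi_1\in C_c^\infty(\R^n)$, $\chi_1\equiv1$ on $\{|\xi|\leq1/2\}$ and $\supp\chi_1\subset\{|\xi|\leq1\}$. Then $\tilde\psi=\psi$ on $\{|\xi|\geq1\}\supset\supp\bigl((1-\chi_0)a\bigr)$, $\tilde\psi\in C^\infty(\R^n)$ is positively homogeneous of order $1$ for $|\xi|\geq1$, and your verification of (B1)--(B3) goes through verbatim. (Alternatively, keep your $\tilde\psi$ and observe that the difference of the two operators has amplitude compactly supported in $\{1\leq|\xi|\leq2\}$, so it can be absorbed into the low-frequency argument.) With this adjustment the proof is complete and follows the paper's own route.
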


The proofs of all results in this section will be given in subsequent
sections. The proofs will follow from the global $H^1(\R^n)$-$L^1(\R^n)$-boundedness
of the corresponding operators of order $-(n-1)/2$ by interpolation with
condition (A2) and by duality. Therefore, among other things, in addition to the 
$L^p(\R^n)$-boundedness,
we will obtain that all
the appearing operators of order $-(n-1)/2$ are globally $H^1(\R^n)$-$L^1(\R^n)$-bounded and $L^\infty(\R^n)$-$BMO(\R^n)$-bounded.

\section{Proof of Theorem \ref{Th:main}}\label{S3}
In this section we give the proof of Theorem \ref{Th:main}.
We only have to
show the $L^p$-boundedness of the operator \eqref{FIOp}
assuming $a(x,y,\xi)\in S^\kappa$ with the critical case $\kappa=\kappa(p)$,
where
$$
\kappa(p)=-(n-1)|1/p-1/2|.
$$
On account of the observation \eqref{adjoint} and the invariance of the
assumptions $a=a(x,y,\xi)\in S^\kappa$ and (A1) under such replacement, 
we can restrict our consideration to the case $1<p<2$
by the duality argument.
Furthermore, by assumption (A2) and the complex interpolation argument,
we have only to show the $H^1$-$L^1$-boundedness 
of the operator $\mathcal P$ with $a(x,y,\xi)\in S^{\kappa(1)}$
under the assumptions
\begin{itemize}
\item
(A1),
\item
$\mathcal P$ is $L^2$-bounded,
\item
$\mathcal P$ is uniformly
$H^1_{comp}$-$L^1_{loc}$-bounded.
\end{itemize}
We remark that we still need the global $L^2$-boundedness of $\mathcal P$
to induce the global $H^1$-$L^1$-boundedness form the local one.

First of all, we prepare some useful lemmas. 
We have (at least formally) the kernel representation
\begin{equation}\label{kernelrep}
\mathcal{P} u(x)=
\int_{\R^n}  K(x,y,x-y)u(y)
\, dy,
\end{equation}
where
\begin{equation}\label{kernel}
K(x,y,z)
=\int_{\R^n}e^{i\{z\cdot\xi+\Phi(x,y,\xi)\}}a(x,y,\xi)\,d\xi.
\end{equation}
On account of the singularity set
\[
\Sigma
=\{(x,y,-\partial_\xi\Phi(x,y,\xi))\in\R^n\times\R^n\times\R^n:x,y,\xi\in\R^n\}
\]
of the kernel \eqref{kernel},
we introduce the function
\[
H(x,y,z):=\inf_{\xi\in\R^n}\abs{z+\partial_\xi\Phi(x,y,\xi)}.
\]
Then we have
$\Sigma=\{(x,y,z)\in\R^n\times\R^n\times\R^n:H(x,y,z)=0 \}
=\bigcap_{d>0}(\Delta_d)^c$,
where
\[
\Delta_d:=\{(x,y,z)\in\R^n\times\R^n\times\R^n:H(x,y,z)\geq d\}.
\]
We also introduce
\begin{align*}
&\widetilde H(z):=\inf_{x,y\in\R^n}H(x,y,z)=
\inf_{x,y,\xi\in\R^n}\abs{z+\partial_\xi\Phi(x,y,\xi)},
\\
&\widetilde\Delta_d:=\{z\in\R^n:\widetilde H(z)\geq d\}.
\end{align*}
Clearly we have the monotonicity of $\Delta_d$ and $\widetilde\Delta_d$
in $d>0$, that is,
$\Delta_{d_1}\subset\Delta_{d_2}$,
$\widetilde\Delta_{d_1}\subset\widetilde\Delta_{d_2}$
for $d_1\geq d_2\geq0$.
In the argument below, we frequently use the quantities
\begin{align*}
&M:=\sum_{|\gamma|\leq n+1}\sup_{x,y,\xi\in\R^n}
|\partial^\gamma_\xi a(x,y,\xi)\jp{\xi}^{-(\kappa(1)-|\gamma|)}|,
\\
&N:=\sum_{1\leq|\gamma|\leq n+2}\sup_{x,y,\xi\in\R^n}
|\partial^\gamma_\xi\Phi(x,y,\xi)\jp{\xi}^{-(1-|\gamma|)}|,
\end{align*}
which are finite since $a\in S^{\kappa(1)}$ and
$\partial_\xi^{\gamma}\Phi\in S^0$ for $|\gamma|=1$ by assumption (A1).
\begin{lem}\label{Lem:inside}
Let $r>0$.
Then for $x\in\widetilde\Delta_{2r}$ and $|y|\leq r$
we have
\begin{equation}\label{HandH}
\widetilde H(x)\leq2H(x,y,x-y)
\end{equation}
and $(x,y,x-y)\in\Delta_r$.
\end{lem}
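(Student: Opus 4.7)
The plan is to unwind definitions and apply a single reverse triangle inequality; no use will be made of (A1), of the size of $\partial_\xi\Phi$, or of any differential structure. The lemma is a purely continuity statement for the singularity-distance functions $H$ and $\widetilde H$: it quantifies the trivial geometric fact that moving a point at distance at least $2r$ from a set by at most $r$ leaves it at distance at least $r$, applied to the $z$-section of the singular set $\Sigma$.

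Concretely, I would first unwind
\[
H(x,y,x-y)=\inf_{\xi\in\R^n}\abs{(x-y)+\partial_\xi\Phi(x,y,\xi)}.
\]
For any fixed $\xi$ the reverse triangle inequality gives
\[
\abs{(x-y)+\partial_\xi\Phi(x,y,\xi)}\ \ge\ \abs{x+\partial_\xi\Phi(x,y,\xi)}-|y|,
\]
and since $\widetilde H(x)=\inf_{x',y',\xi'}\abs{x+\partial_\xi\Phi(x',y',\xi')}$ is an infimum over a set of triples that in particular includes $(x,y,\xi)$, one has $\widetilde H(x)\le \abs{x+\partial_\xi\Phi(x,y,\xi)}$. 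Taking $\inf_\xi$ on both sides of this chain yields the single master estimate
\[
H(x,y,x-y)\ \ge\ \widetilde H(x)-|y|.
\]

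Both conclusions are then immediate. From $x\in\widetilde\Delta_{2r}$ we have $\widetilde H(x)\ge 2r$, so $|y|\le r\le \widetilde H(x)/2$, which gives
\[
H(x,y,x-y)\ \ge\ \widetilde H(x)-\widetilde H(x)/2\ =\ \widetilde H(x)/2,
\]
i.e.\ exactly \eqref{HandH}. The second statement follows at once from the same chain: $H(x,y,x-y)\ge \widetilde H(x)/2\ge r$ means $(x,y,x-y)\in\Delta_r$ by definition of $\Delta_r$. I do not foresee any genuine obstacle here; the only thing worth noting is that one must recognise that the relevant triangle-inequality step is to compare the $z$-argument $x-y$ with $x$ rather than with some critical point of $\xi\mapsto -\partial_\xi\Phi(x,y,\xi)$, after which the remainder of the argument is automatic.
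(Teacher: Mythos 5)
Your proof is correct and follows essentially the same route as the paper: both apply the triangle inequality in the form $|x-y+\partial_\xi\Phi(x,y,\xi)|\geq|x+\partial_\xi\Phi(x,y,\xi)|-|y|$, combine it with $\widetilde H(x)\leq|x+\partial_\xi\Phi(x,y,\xi)|$, and then use $|y|\leq r\leq \widetilde H(x)/2$. Your packaging of the key step as the single master estimate $H(x,y,x-y)\geq\widetilde H(x)-|y|$ is a nice, clean formulation of the same argument.
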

\begin{proof}
For $x\in\widetilde\Delta_{2r}$ and $|y|\leq r$, we have
\begin{align*}
\widetilde H(x)
&\leq H(x,y,x)\leq|x+\partial_\xi\Phi(x,y,\xi)|
\leq|x-y+\partial_\xi\Phi(x,y,\xi)|+|y| 
\\
&\leq|x-y+\partial_\xi\Phi(x,y,\xi)|+\widetilde H(x)/2
\end{align*}
since $\widetilde H(x)\geq 2r$,
hence we have
$\widetilde H(x)\leq2|x-y+\partial_\xi\Phi(x,y,\xi)|$ for all
$\xi\in\R^n$
to conclude \eqref{HandH}.
Since $\widetilde H(x)\geq 2r$ again, the assertion
$(x,y,x-y)\in\Delta_r$ is readily obtained from \eqref{HandH}.
\end{proof}

\begin{lem}\label{Lem:kernel}
The kernel $K(x,y,z)$ is
smooth on $\bigcup_{d>0}\Delta_d$,
and it satisfies
\begin{equation}\label{boundedness}
\n{H(x,y,z)^{n+1} K(x,y,z)}
_{L^\infty(\Delta_d)}\leq C(n,d,M,N),             
\end{equation}
where $C(n,d,M,N)$ is a positive constant depending
only on $n$, $d>0$, $M$ and $N$.
The function $\widetilde H(z)$ satisfies
\begin{equation}\label{integrable}
\n{\widetilde H(z)^{-(n+1)}}_{L^1(\widetilde\Delta_d)}
\leq C(n,d,N),              
\end{equation}
where $C(n,d,N)$ is a positive constant depending only on $n$, $d>0$ and $N$.
\end{lem}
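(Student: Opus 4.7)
The plan is to establish the bound on $K$ by integration by parts in the defining oscillatory integral, and to obtain the integrability of $\widetilde H^{-(n+1)}$ from a direct size estimate provided by $\partial_\xi\Phi\in S^0$. Fix $(x,y,z)\in\Delta_d$ and introduce
\[ L=\frac{1}{i\abs{z+\partial_\xi\Phi}^2}\,(z+\partial_\xi\Phi)\cdot\nabla_\xi, \]
which is smooth in $\xi$ on $\Delta_d$ because the denominator is bounded below by $H(x,y,z)^2\geq d^2$. Since $L e^{i(z\cdot\xi+\Phi)}=e^{i(z\cdot\xi+\Phi)}$, after inserting a cut-off $\chi(\xi/R)$, integrating by parts $n+1$ times, and letting $R\to\infty$,
\[ K(x,y,z)=\int_{\R^n}e^{i(z\cdot\xi+\Phi(x,y,\xi))}\,(L^t)^{n+1}a(x,y,\xi)\,d\xi, \]
where $L^t u=-(1/i)\nabla_\xi\cdot(\omega u)$ with $\omega=(z+\partial_\xi\Phi)/\abs{z+\partial_\xi\Phi}^2$.

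The core estimate is that on $\Delta_d$, for every $k\geq 0$ and every multi-index $\beta$,
\[ \abs{\partial_\xi^\beta(L^t)^k a(x,y,\xi)}\leq C_{k,\beta,d}\,\jp{\xi}^{\kappa(1)-k-\abs{\beta}}\,\abs{z+\partial_\xi\Phi}^{-k}, \]
with constants depending only on $n$, $d$, $M$, $N$. I would prove this by induction on $k$: the base case is the symbol estimate for $a$, and in the inductive step one uses $\abs{\partial^\alpha\omega}\leq C_\alpha\jp{\xi}^{-\abs{\alpha}}\abs{z+\partial_\xi\Phi}^{-1-\abs{\alpha}}$ and $\abs{\partial^\alpha(\nabla_\xi\cdot\omega)}\leq C_\alpha\jp{\xi}^{-1-\abs{\alpha}}\abs{z+\partial_\xi\Phi}^{-2-\abs{\alpha}}$, both deduced from $\partial_\xi^{1+\gamma}\Phi\in S^{-\abs{\gamma}}$ supplied by (A1); the extra power of $\abs{z+\partial_\xi\Phi}^{-1}$ that Leibniz produces is absorbed into the constant via $\abs{z+\partial_\xi\Phi}^{-1}\leq d^{-1}$. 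Taking $k=n+1$, $\beta=0$, and using $\abs{z+\partial_\xi\Phi}\geq H(x,y,z)$, I obtain
\[ \abs{K(x,y,z)}\leq C(n,d,M,N)\,H(x,y,z)^{-(n+1)}\int_{\R^n}\jp{\xi}^{-(n-1)/2-(n+1)}\,d\xi, \]
and the $\xi$-integral is finite since $(n-1)/2+(n+1)=(3n+1)/2>n$ for every $n\geq 1$. Smoothness of $K$ on $\bigcup_{d>0}\Delta_d$ then follows by differentiation under the integral sign: each $\partial_{x,y,z}$-derivative produces an integrand of the same type, which remains absolutely integrable after taking sufficiently many integrations by parts.

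For the integrability of $\widetilde H(z)^{-(n+1)}$, assumption (A1) gives $\abs{\partial_\xi\Phi(x,y,\xi)}\leq N_0$ uniformly in $(x,y,\xi)$, with $N_0$ controlled by $N$. The reverse triangle inequality yields $\widetilde H(z)\geq\abs{z}-N_0$ for every $z$, hence $\widetilde H(z)\geq\max\{d,\abs{z}-N_0\}$ on $\widetilde\Delta_d$. Splitting at $\abs{z}=2N_0$,
\[ \int_{\widetilde\Delta_d}\widetilde H(z)^{-(n+1)}\,dz\leq d^{-(n+1)}\abs{B(0,2N_0)}+2^{n+1}\int_{\abs{z}\geq 2N_0}\abs{z}^{-(n+1)}\,dz, \]
and the last integral is a constant multiple of $\int_{2N_0}^{\infty}r^{-2}\,dr<\infty$.

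The main obstacle is the inductive bookkeeping in the first part: although $\omega$ itself carries only $\abs{z+\partial_\xi\Phi}^{-1}$ and no $\jp{\xi}^{-1}$, every further $\xi$-derivative of $\omega$ or of $\nabla_\xi\cdot\omega$ must pick up an additional $\jp{\xi}^{-1}$ coming from $\partial_\xi^{2}\Phi\in S^{-1}$. It is precisely this balance that lets the $\xi$-integral converge with exactly $n+1$ integrations by parts and that matches the seminorms $M$ (involving at most $n+1$ $\xi$-derivatives of $a$) and $N$ (involving at most $n+2$ $\xi$-derivatives of $\Phi$) in the stated constant.
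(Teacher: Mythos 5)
Your proof is correct and follows the same path as the paper's: $n+1$ integrations by parts using the identical vector field $L$, the lower bound $d\leq H(x,y,z)\leq\abs{z+\partial_\xi\Phi(x,y,\xi)}$ on $\Delta_d$, and for the second estimate a split of $\widetilde\Delta_d$ at $\abs{z}\approx 2N$ using $\abs{\partial_\xi\Phi}\leq N$ (so that $\widetilde H(z)\gtrsim\abs{z}$ for large $\abs{z}$). The only difference is that you spell out the inductive symbol-type estimate for $(L^t)^k a$ and the resulting convergence of the $\xi$-integral, which the paper compresses into the phrase that the bound is ``easily'' obtained.
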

\begin{proof}
The expression \eqref{kernel} is justified by the
integration by parts, and we have
$$
K(x,y,z)=\int_{\R^n}e^{i\{z\cdot\xi+\Phi(x,y,\xi)\}}
\left(L^*\right)^{n+1} a(x,y,\xi)\,d\xi,
$$
where $L^*$ is the transpose of the operator
$$
L=\frac{(z+\partial_\xi\Phi)\cdot\partial_\xi}{i|z+\partial_\xi\Phi|^2}.
$$
Noticing the relation $d\leq H(x,y,z)\leq|z+\partial_\xi\Phi(x,y,\xi)|$
for $(x,y,z)\in\Delta_d$ and $\xi\in\R^n$,
we easily have the property \eqref{boundedness}.
On the other hand,
we have
$$|z|\leq |z+\partial_\xi\Phi(x,y,\xi)|+N$$
for any $x,y\in\R^n$, $\xi\not=0$,
hence $|z|\leq \widetilde H(z)+N$.
Then for $|z|\geq 2N$ we have $|z|\leq \widetilde H(z)+|z|/2$,
hence $|z|\leq 2\widetilde H(z)$,
and the property \eqref{integrable} is obtained from it since
\begin{align*}
\n{\widetilde H(z)^{-(n+1)}}_{L^1(\widetilde\Delta_d)}
&\leq
\n{\widetilde H(z)^{-(n+1)}}_{L^1(\widetilde\Delta_d\cap\b{|z|\leq2N})}+
\n{\widetilde H(z)^{-(n+1)}}_{L^1(\widetilde\Delta_d\cap\b{|z|\geq2N})}
\\
&\leq
d^{-(n+1)}\n{1}_{L^1(|z|\leq2N)}+
2^{n+1}\n{|z|^{-(n+1)}}_{L^1(|z|\geq2N)}.
\end{align*}
The proof is complete.
\end{proof}

\begin{lem}\label{Lem:L1est}
Let $r\geq1$, and let $h\in\R^n$.
Suppose $\supp f\subset \{x\in \R^n:|x|\leq r\}$.
Then we have
\[
\left\|\tau_h^*\mathcal{P}\tau_h f\right\|_{L^1(\widetilde\Delta_{2r})}
\leq C(n,M,N)\n{f}_{L^1},
\]
where $C(n,M,N)$ is a positive constant depending only on $n$, $M$ and $N$.
\end{lem}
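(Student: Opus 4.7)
The plan is to reduce to the untranslated case $h=0$ and then exploit the pointwise kernel bounds of Lemma \ref{Lem:kernel} together with the geometric estimate of Lemma \ref{Lem:inside} to produce a constant depending only on $n$, $M$, $N$.

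First I would invoke \eqref{translation} to note that $\tau_h^*\mathcal{P}\tau_h$ is again of the form \eqref{FIOp}, with new phase $\Phi^h(x,y,\xi)=\Phi(x+h,y+h,\xi)$ and new amplitude $a^h(x,y,\xi)=a(x+h,y+h,\xi)$. The quantities $M$ and $N$, being taken as suprema over all of $\R^n$, are manifestly unchanged; moreover, the function $\widetilde H(z)$ and hence the set $\widetilde\Delta_{2r}$ are also invariant under this change, since $\inf_{x,y,\xi}|z+\partial_\xi\Phi(x+h,y+h,\xi)|=\widetilde H(z)$. It is therefore enough to prove the estimate for $h=0$.

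With the kernel representation \eqref{kernelrep}, the task reduces to controlling
\[
\int_{\widetilde\Delta_{2r}}\int_{|y|\leq r}|K(x,y,x-y)|\,|f(y)|\,dy\,dx.
\]
For $x\in\widetilde\Delta_{2r}$ and $|y|\leq r$, Lemma \ref{Lem:inside} furnishes $(x,y,x-y)\in\Delta_r$ together with $H(x,y,x-y)\geq\widetilde H(x)/2$. Because $r\geq 1$, the monotonicity $\Delta_r\subset\Delta_1$ allows me to apply Lemma \ref{Lem:kernel} at the fixed scale $d=1$, giving
\[
|K(x,y,x-y)|\leq C(n,1,M,N)\,H(x,y,x-y)^{-(n+1)}\leq 2^{n+1}C(n,1,M,N)\,\widetilde H(x)^{-(n+1)}.
\]
Crucially, this upper bound depends only on $x$, not on $y$, so Fubini immediately separates the two integrations and leaves $\|f\|_{L^1}$ multiplied by $\int_{\widetilde\Delta_{2r}}\widetilde H(x)^{-(n+1)}\,dx$. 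Using $\widetilde\Delta_{2r}\subset\widetilde\Delta_2$ (again from $r\geq 1$) and Lemma \ref{Lem:kernel} \eqref{integrable} at $d=2$, this last integral is bounded by $C(n,2,N)$, which yields the claimed estimate.

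The only delicate point is to make sure the resulting constant is genuinely uniform in $r$ and $h$. Uniformity in $h$ is automatic from the translation invariance noted above. Uniformity in $r$ is exactly what the hypothesis $r\geq 1$ buys: it freezes both the scale at which the kernel bound \eqref{boundedness} is invoked and the scale of the integrability estimate \eqref{integrable}, via the two inclusions $\Delta_r\subset\Delta_1$ and $\widetilde\Delta_{2r}\subset\widetilde\Delta_2$. Once these reductions are in place, the remainder of the argument is routine, the main subtlety having been pushed into the kernel lemma itself.
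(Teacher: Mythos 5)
Your proof is correct and follows essentially the same route as the paper: use Lemma \ref{Lem:inside} to place $(x,y,x-y)$ in $\Delta_r$ and to compare $H$ with $\widetilde H$, apply the kernel bound \eqref{boundedness} and the integrability estimate \eqref{integrable} after shrinking to the fixed scales $d=1$, $d=2$ via monotonicity and $r\geq 1$, and then invoke \eqref{translation} for uniformity in $h$. The only (mild) difference is ordering: you reduce to $h=0$ up front and also note explicitly that $\widetilde H$ and $\widetilde\Delta_{2r}$ are translation-invariant, which the paper leaves implicit when it appeals to \eqref{translation} at the end; this is a small but genuine point of care that makes the reduction airtight.
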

\begin{proof}
For $x\in\widetilde\Delta_{2r}$ and $|y|\leq r$, we have
$\widetilde H(x)\leq2H(x,y,x-y)$ and $(x,y,x-y)\in\Delta_r$
by Lemma \ref{Lem:inside}.
Then from the kernel representation \eqref{kernelrep},
we obtain
\begin{align*}
|\mathcal{P} f(x)|
&\leq 2^{n+1}\widetilde H(x)^{-(n+1)}\int_{|y|\leq r}
\abs{H(x,y,x-y)^{n+1}K(x,y,x-y)f(y)}\,dy
\\
&\leq 2^{n+1}\widetilde H(x)^{-(n+1)}
\n{H(x,y,z)^{n+1}K(x,y,z)}_{L^\infty(\Delta_r)}
\n{f}_{L^1}
\end{align*}
for $x\in\widetilde\Delta_{2r}$.
Hence we have by Lemma \ref{Lem:kernel} and the monotonicity
of $\Delta_d$ and $\widetilde\Delta_d$
\begin{align*}
\left\|\mathcal{P} f\right\|_{L^1(\widetilde\Delta_{2r})}
&\leq 2^{n+1}
\left\|\widetilde H(x)^{-(n+1)}\right\|_{L^1(\widetilde\Delta_{2r})}
\n{H(x,y,z)^{n+1}K(x,y,z)}_{L^\infty(\Delta_r)}\n{f}_{L^1}
\\
&\leq 2^{n+1}\left\|\widetilde H(x)^{-(n+1)}
\right\|_{L^1(\widetilde\Delta_{2})}
\n{H(x,y,z)^{n+1}K(x,y,z)}_{L^\infty(\Delta_1)}\n{f}_{L^1}
\\
&\leq 2^{n+1}C(n,2,N)\,C(n,1,M,N)\n{f}_{L^1}.
\end{align*}
On account of the observation \eqref{translation} and the
invariance of the quantities $M$ and $N$
under such replacement, we have the conclusion.
\end{proof}

\begin{lem}\label{Lem:outside}
Let $r\geq1$.
Then we have
$\R^n\setminus\widetilde\Delta_{2r}\subset\b{z:|z|< \p{2+N}\,r}$. 
\end{lem}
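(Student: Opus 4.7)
The statement is essentially a one-line consequence of an inequality already used inside the proof of Lemma~\ref{Lem:kernel}, so the plan is simply to make that reasoning explicit and then exploit the hypothesis $r\ge 1$.

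First I would unpack the definitions: a point $z$ lies in $\R^n\setminus\widetilde\Delta_{2r}$ precisely when $\widetilde H(z)<2r$, so the claim reduces to showing that $\widetilde H(z)<2r$ forces $|z|<(2+N)r$. The key input is assumption (A1), which asserts that $\partial_\xi^\gamma \Phi \in S^0$ for $|\gamma|=1$; by the very definition of $N$ this yields the pointwise bound $|\partial_\xi\Phi(x,y,\xi)|\le N$, uniformly in $x,y,\xi$ (for $\xi\neq 0$, which is harmless for taking infima).

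Next I would apply the triangle inequality in the form $|z|\le |z+\partial_\xi\Phi(x,y,\xi)|+|\partial_\xi\Phi(x,y,\xi)|\le |z+\partial_\xi\Phi(x,y,\xi)|+N$, valid for every $x,y\in\R^n$ and every $\xi\neq 0$. Taking the infimum of the right-hand side over $x,y,\xi$ gives $|z|\le \widetilde H(z)+N$, exactly as noted in the proof of Lemma~\ref{Lem:kernel}.

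Finally, combining this with the hypothesis $\widetilde H(z)<2r$ and the assumption $r\ge 1$ (so that $N\le Nr$), I obtain
\[
|z|\;\le\;\widetilde H(z)+N\;<\;2r+N\;\le\;2r+Nr\;=\;(2+N)r,
\]
which is the desired inclusion. There is no genuine obstacle here; the only thing to be careful about is quoting the correct piece of (A1) to justify the uniform bound $|\partial_\xi\Phi|\le N$ and using $r\ge 1$ to absorb the additive constant $N$ into the multiplicative factor $(2+N)r$.
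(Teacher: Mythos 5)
Your proof is correct and follows essentially the same approach as the paper's: both arguments unpack $\widetilde H(z)<2r$, invoke the uniform bound $|\partial_\xi\Phi|\le N$ coming from (A1) via the triangle inequality to get $|z|\le\widetilde H(z)+N$, and then use $r\ge 1$ to absorb $N$ into $(2+N)r$. The only cosmetic difference is that you take an infimum to produce $|z|\le\widetilde H(z)+N$ directly, whereas the paper extracts a near-infimizing triple $(x_0,y_0,\xi_0)$ and applies the triangle inequality there; the content is identical.
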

\begin{proof}
For $z\in\R^n\setminus\widetilde\Delta_{2r}$,
we have
$\widetilde H(z)=\inf_{x,y,\xi\in\R^n}\abs{z+\partial_\xi\Phi(x,y,\xi)}
<2r$.
Hence, there exist $x_0,y_0,\xi_0\in\R^n$ such that
$$
|z+\partial_\xi\Phi(x_0,y_0,\xi_0)|<2r.
$$
Then we have 
\[
|z|\leq|z+\partial_\xi\Phi(x_0,y_0,\xi_0)|+|\partial_\xi\Phi(x_0,y_0,\xi_0)|
\leq 2r+N\leq \p{2+N}r
\]
since $r\geq1$.
\end{proof}
Now we are ready to prove the $H^1$-$L^1$-boundedness.
We use the characterisation of $H^1$ by the atomic
decomposition proved by Coifman and Weiss \cite{CW}.
That is, any $f\in H^1(\R^n)$ can be
represented as 
$$
f=\sum_{j=1}^\infty\lambda_jg_j,\quad\lambda_j\in\C,\quad g_j:\textrm{ atom},
$$
and the norm $\|f\|_{H^1}$ is equivalent to the norm
$\n{\b{\lambda_j}_{j=1}^\infty}_{\ell^1}=\sum^\infty_{j=1}|\lambda_j|$.
Here we call a function $g$
on $\R^n$ an atom if there is a ball $B=B_g\subset\R^n$ such that
$\supp g\subset B$, $\|g\|_{L^\infty}\leq|B|^{-1}$ ($|B|$ is the
Lebesgue measure of the ball $B$) and
$\int g(x)\,dx=0$.
From this, all we
have to show is the estimate
$$
\left\|\mathcal{P}g\right\|_{L^1(\R^n)}\leq C
$$
with a constant $C>0$ for all atoms $g$.
By an appropriate translation,
it is further reduced to the estimate
$$
\left\|\tau_h^*\mathcal{P}\tau_hf\right\|_{L^1(\R^n)}\leq C,
\quad f\in \A_r,
$$
where $\A_r$
is the set of all functions $f$ on $\R^n$
such that
$$
\supp f\subset B_r=\{x\in\R^n:|x|\leq r\},\quad\|f\|_{L^\infty}\leq |B_r|^{-1},
\quad\int f(x)\,dx=0.
$$
Here an hereafter in this section, $C$ always denotes
a constant which is independent of $h\in\R^n$ and
$0<r<\infty$.

Suppose $f\in\A_{r}$ with $r\geq 1$.
Then we split $\R^n$ into two parts
$\widetilde\Delta_{2r}$ and $\R^n\setminus\widetilde\Delta_{2r}$.
For the part $\widetilde\Delta_{2r}$,
we have by Lemma \ref{Lem:L1est}
\[
\left\|\tau_h^*\mathcal{P} \tau_hf\right\|_{L^1(\widetilde\Delta_{2r})}
\leq C\n{f}_{L^1}\leq C.
\]
For the part $\R^n\setminus\widetilde\Delta_{2r}$,
we have
by Lemma \ref{Lem:outside} and the Cauchy-Schwarz inequality
\begin{align*}
\left\|\tau_h^*\mathcal{P} \tau_hf\right\|_{L^1(\R^n\setminus\widetilde\Delta_{2r})}
&\leq
\|1\|_{L^2(|x|< \p{2+N}r)}
\left\|\tau_h^*\mathcal{P} \tau_hf\right\|_{L^2(\R^n)}
\\
&\leq
Cr^{n/2}\|f\|_{L^2(\R^n)}
\leq C,
\end{align*}
where we have used the assumption that
$\mathcal{P}$ is $L^2$-bounded.

Suppose now $f\in\A_{r}$ with $r\leq 1$.
then we split $\R^n$ into the
parts $\Delta_{2}$ and $\R^n\setminus\Delta_2$.
For the part  $\Delta_{2}$,
we have by Lemma \ref{Lem:L1est} with $r=1$ and the inclusion
$\supp f\subset B_r\subset B_1$
\[
\left\|\tau_h^*\mathcal{P} \tau_hf\right\|_{L^1(\widetilde\Delta_{2})}
\leq C\n{f}_{L^1}\leq C.
\]
For the part $\R^n\setminus\Delta_2$,
we have by Lemma \ref{Lem:outside}
\begin{align*}
\left\|\tau_h^*\mathcal{P} \tau_h f\right\|_{L^1(\R^n\setminus\Delta_{2})}
&\leq
\left\|\tau_h^*\mathcal{P} \tau_h f\right\|_{L^1(|x|<2+N)}
\\
&\leq C\n{f}_{H^1}
\leq C,
\end{align*}
where we have used the fact that $\mathcal{P}$ is uniformly
$H^1_{comp}$-$L^1_{loc}$-bounded.
Now the proof of Theorem \ref{Th:main}
is complete.

\section{Proof of Corollaries \ref{Cor1} and \ref{Cor2}}
\label{S4}

In this section we prove Corollaries \ref{Cor1} and \ref{Cor2}.

\begin{proof}[Proof of Corollary \ref{Cor1}]
Let us induce assumptions (A1)--(A3) of Theorem \ref{Th:main}
from the assumptions (B1)--(B3) of Corollary \ref{Cor1}
for the special case $\phi(x,y,\xi)=x\cdot\xi-\varphi(y,\xi)$,
in other words, $\Phi(x,y,\xi)=y\cdot\xi-\varphi(y,\xi)$.
We remark that (B1) is just an interpretation of assumption (A1).

As for (A2),
a sufficient condition for the $L^2$-boundededness of $\mathcal P$
is known from Asada and Fujiwara \cite{AF}, that is, Theorem \ref{Th:AF}
in Introduction.
On account of the observation
\eqref{adjoint}, $\mathcal P^*$ is also $L^2$-bounded
under the same condition.
In particular,
(A2) is fulfilled if (B1) and (B2) are satisfied.

A sufficient condition for the $H^1_{comp}$-$L^1_{loc}$-boundedness
of $P$ 
is known by the work of Seeger, Sogge and Stein \cite{SSS},
that is, $P$ is $H^1_{comp}$-$L^1_{loc}$-bounded
for $a(x,y,\xi)\in S^{-(n-1)/2}$
if $\varphi(y,\xi)$ is a real-valued $C^\infty$-function
on $\R^n\times(\R^n\setminus0)$ and positively homogeneous of order $1$.
If we carefully trace the argument in \cite{SSS},
we can say that
$\chi_{K} P \chi_{K}$ is $H^1(\R^n)$-$L^1(\R^n)$-bounded
for any compact set $K\subset\R^n$
and its operator norm is bounded by a constant depending only on
$n$, $K$ and quantities
\begin{align*}
&M_\ell=\sum_{|\alpha|+|\beta|+|\gamma|\leq \ell}\sup_{x,y,\xi\in\R^n}
|\partial^\alpha_x\partial^\beta_y
\partial^\gamma_\xi a(x,y,\xi)\jp{\xi}^{(n-1)/2+|\gamma|)}|,
\\
&N_\ell=\sum_{\substack{|\beta|\leq\ell, \\ 1\leq|\gamma|\leq \ell}}
\sup_{\substack{x,y\in\R^n,\\ \xi\neq0}}
|\partial^\beta_y
\partial^\gamma_\xi (y\cdot\xi-\varphi(y,\xi))\abs{\xi}^{-(1-|\gamma|)}|
\end{align*}
with some large $\ell$.
The same is true for $P^*$ if we trace the argument in \cite{St2} instead
but we require (B2) in this case.
Then $P$ and $P^*$ are uniformly $H^1_{comp}$-$L^1_{loc}$-bounded
if $M_\ell$ and $N_\ell$ are finite
since the quantities $M_\ell$ and $N_\ell$ are invariant under the replacement
in \eqref{translation}.

Based on this fact, $P$ and $P^*$ are uniformly
$H^1_{comp}$-$L^1_{loc}$-bounded if $a\in S^{-(n-1)/2}$
under the assumptions (B1)--(B3).
In fact, if we split $a(x,y,\xi)$ into the sum of $a(x,y,\xi)g(\xi)$ and
$a(x,y,\xi)(1-g(\xi))$ with an appropriate smooth cut-off function
$g\in C^\infty_0(\R^n)$ which is equal to $1$ near the origin,
the terms $P_1$ and $P_1^*$ corresponding to $a(x,y,\xi)(1-g(\xi))$ are 
uniformly $H^1_{comp}$-$L^1_{loc}$-bounded by the above observation.
On the other hand, the terms $P_2$ and $P_2^*$ corresponding
to $a(x,y,\xi)g(\xi)$ are 
$L^1$-bounded (hence uniformly $H^1_{comp}$-$L^1_{loc}$-bounded)
because 
\begin{align*}
&P_2u(x)=\int  K(x,y) u(y)\,dy,\quad P_2^*u(x)=\int \overline{K(y,x)} u(y)\,dy,
\\
& K(x,y)
=\int_{\R^n}e^{i(x\cdot\xi-\phi(y,\xi))}
a(x,y,\xi)g(\xi)\,d\xi,
\end{align*}
and the integral kernel $K(x,y)$
is integrable in both $x$ and $y$.
This fact can be verified by the integration by parts
\begin{align*}
K(x,y)
&=(1+|x-y|^2)^{-n} \int_{\R^n}(1-\Delta_\xi)^n e^{i(x-y)\cdot\xi}\cdot
e^{i(y\cdot\xi-\phi(y,\xi))}a(x,y,\xi)g(\xi)\,d\xi
\\
&=(1+|x-y|^2)^{-n}\int_{\R^n} e^{i(x-y)\cdot\xi}\cdot
(1-\Delta_\xi)^n \{e^{i(y\cdot\xi-\phi(y,\xi))}a(x,y,\xi)g(\xi)\}\,d\xi
\end{align*}
followed by the the conclusion
\[
\abs{K(x,y)}\leq C(1+|x-y|^2)^{-n}
\]
because of assumptions (B1), $a\in S^{-(n-1)/2}$, and $g\in C^\infty_0$.

As a conclusion, (A3) is fulfilled if (B1)--(B3) are satisfied, and thus
the proof of Corollary \ref{Cor1} is complete.
\end{proof}


\begin{proof}[Proof of Corollary \ref{Cor2}]
Again we spilt the amplitude $a(x,\xi)$ into the sum of $a(x,\xi)g(\xi)$ and
$a(x,\xi)(1-g(\xi))$ as in the proof of Corollary \ref{Cor1}.
We remark that the operator $\mathcal T$ defined by \eqref{FIOp2}
is the operator $P$ defined by \eqref{FIOp1}
with $\varphi(y,\xi)=y\cdot\xi-\psi(\xi)$ and
$a(x,y,\xi)=a(x,\xi)$ independent of $y$.
For the term $\mathcal T_1$ corresponding to $a(x,\xi)(1-g(\xi))$,
we just apply Corollary \ref{Cor1}.
For the term $\mathcal T_2$ corresponding to $a(x,\xi)g(\xi)$,
we have
\[
\mathcal T_2u(x)
=
\int_{\R^n} e^{i(x\cdot\xi+\psi(\xi))}a(x,\xi)g(\xi)\widehat u(\xi)\, d\xi
=
a(X,D_x)e^{i\psi(D_x)}g(D_x)u(x).
\]
The pseudo-differential operator $a(X,D_x)$ is $L^p$-bounded 
(see Kumano-go and Nagase \cite{KN})
and the Fourier multiplier $e^{i\psi(D_x)}g(D_x)$ is also $L^p$-bounded
by the Marcinkiewicz theorem (see Stein \cite{St}) 
since
$\abs{\partial^\alpha \p{e^{i\psi(\xi)}g(\xi)}}
\leq C_\alpha|\xi|^{-|\alpha|}$ for any multi-index $\alpha$.
The proof of Corollary \ref{Cor2} is complete.
\end{proof}

\section{Applications to hyperbolic equations}
\label{S5}

In this section we briefly outline an application of the obtained results to the global
$L^p$-estimates for solutions to the Cauchy problems for strictly hyperbolic partial
differential equations. In particular,
in \cite{CR}, the global $L^p$-boundedness of solutions of such equations was established
with a loss of weight at infinity. In Theorem \ref{THM:hyp1} we show that this loss of weight
can be eliminated.

\medskip
For simplicity, we consider equation of the first order   
\begin{equation}
     \left\{ \begin{array}{ll}
         (D_t+a(t,x,D_x) u(t,x)=0, & t\not=0, \, x\in\R^n, \\
         u(0,x)=f(x),
      \end{array} \right.
    \label{eq:Cauchy}
  \end{equation}
 where, as usual, $D_t=-i\partial_t$ and $D_{x}=-i\partial_{x}$.
 We assume that
 the symbol $a(t,x,\xi)$ is a classical symbol with real-valued principal part such that
 \begin{equation}\label{EQ:apps-symbol}
  |\partial_t^k \partial_x^\beta \partial_\xi^\alpha a(t,x,\xi)|\leq C_{k\alpha\beta}
  \jp{\xi}^{1-|\alpha|}
 \end{equation}  
 holds for all $x,\xi\in\R^n$, all $t\in [0,T]$ 
 for some $T>0$,
 and all $k,\alpha,\beta$, with constants $C_{k\alpha\beta}$ independent
 of $t,x,\xi$.

 We consider strictly hyperbolic equations which
 means that the principal symbol of $a(t,x,\xi)$ is real-valued. 

 We note that following Kumano-go \cite{Kumano-go:BOOK-pseudos} we can extend the
 conclusions below also to higher order equations, especially if we impose
 appropriate conditions on lower order terms to achieve the perfect diagonalisation 
 of the corresponding hyperbolic system to keep the phase function in the 
 required form, similarly to the SG-case as in Coriasco \cite{Coriasco:SG-FIOs-II}.

First we note that it was shown by Seeger, Sogge and Stein  \cite{SSS} that if
we have the Sobolev space data $f\in L^{p}_{\alpha+(n-1)|1/p-1/2|}(\R^n)$, 
 for some $\alpha\in\R$, then for each fixed time $t$ the solution satisfies
 $u(t,\cdot)\in L^p_{\alpha}(\R^n)$ locally, $1<p<\infty$. 
 Moreover, this order is
 sharp for every $t$ in the complement of a discrete set
 in $\R$ provided that $a$
 is elliptic in $\xi$. 
 
Let us now outline that Theorem \ref{main}
implies that this result holds globally on $\R^n$.
Under the assumption \eqref{EQ:apps-symbol},
it follows from Kumano-go
 \cite[Ch. 10, \S 4]{Kumano-go:BOOK-pseudos} 
 that  for sufficiently small times, 
 the solution $u(t,x)$ to the Cauchy problem
 \eqref{eq:Cauchy} can be constructed as a Fourier integral operator
 in the form \eqref{EQ:P}. 
 Moreover, it follows from \cite[Ch. 10, Theorem 4.1]{Kumano-go:BOOK-pseudos} 
 that the phase and the amplitude of the propagator
 satisfy assumptions of Theorem \ref{main}. 
 Consequently, we obtain:
 
 \begin{thm}\label{THM:hyp1}
 Let the symbol $a(t,x,\xi)$ satisfy conditions
 \eqref{EQ:apps-symbol}. Let $1<p<\infty$.
 If $f$ is such that $f\in L^p_{(n-1)|1/p-1/2|}(\R^n)$,
 then for each $t\in [0,T]$, the solution $u(t,x)$ of
 the Cauchy problem \eqref{eq:Cauchy} satisfies
 $u(t,\cdot)\in L^p(\R^n)$. Moreover,
 for every $\alpha\in\R$ and $m\in\R$,
 there is $C_T>0$ such that we have the estimate
 \begin{equation}\label{EQ:est-hyp-2}
 \|u(t,\cdot)\|_{L^p_{\alpha}(\R^n)}\leq C_T
 \|f\|_{L^p_{\alpha+(n-1)|1/p-1/2|}(\R^n)},
 \end{equation} 
 for all $t\in [0,T]$ and all $f$ such that the right hand side norm is
 finite.
 \end{thm}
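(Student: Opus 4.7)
My plan is to reduce Theorem \ref{THM:hyp1} to Corollary \ref{Cor1} by representing the propagator $S(t) : f \mapsto u(t,\cdot)$ as a Fourier integral operator of the form \eqref{FIOp1} whose phase and amplitude satisfy hypotheses (B1)--(B3), and then to absorb the Sobolev orders by composing with Bessel potentials.

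First, for sufficiently small $T_0 > 0$ and every $t \in [0, T_0]$, I invoke Kumano-go \cite[Ch.\ 10, Thm.\ 4.1]{Kumano-go:BOOK-pseudos} to write
$$ S(t) f(x) = \int_{\R^n}\int_{\R^n} e^{i(x\cdot\xi - \varphi(t,y,\xi))} b(t,x,y,\xi) f(y) \, dy \, d\xi, $$
where $\varphi(t, \cdot, \cdot)$ is real-valued, smooth, positively homogeneous of degree $1$ in $\xi$ for large $|\xi|$, satisfies the global graph condition \eqref{graphcond}, and meets the symbol estimate on $y\cdot\xi - \varphi$ demanded by Corollary \ref{Cor1}, all uniformly in $t \in [0, T_0]$; the amplitude $b$ lies in $S^0$ uniformly in $t$. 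These are precisely hypotheses (B1)--(B3) together with an $S^0$ amplitude.

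Second, to accommodate the Sobolev loss $s := (n-1)|1/p - 1/2|$, I write $S(t) = \bigl(S(t)\langle D_x\rangle^{-s}\bigr)\langle D_x\rangle^{s}$. The factor $S(t)\langle D_x\rangle^{-s}$ is again a Fourier integral operator with the same phase and amplitude in $S^{-s}$, as follows from the standard composition of an FIO with a pseudodifferential operator (stationary phase in the product oscillatory integral, using the graph condition to preserve the canonical form). Corollary \ref{Cor1} then yields $L^p$-boundedness of $S(t)\langle D_x\rangle^{-s}$, i.e.\ the $\alpha = 0$ case of \eqref{EQ:est-hyp-2}. For general $\alpha \in \R$, I appeal to Egorov's theorem to write $\langle D_x\rangle^\alpha S(t) = S(t) Q_\alpha(t,X,D_x)$ with $Q_\alpha$ a classical pseudodifferential operator of order $\alpha$, so that
$$ \langle D_x\rangle^\alpha S(t) \langle D_x\rangle^{-\alpha - s} = S(t) \bigl(Q_\alpha(t,X,D_x) \langle D_x\rangle^{-\alpha - s}\bigr). $$
The bracketed factor is a pseudodifferential operator of order $-s$, so the whole composition is an FIO in the class of Theorem \ref{main} with amplitude in $S^{-s}$, hence $L^p$-bounded. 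This proves \eqref{EQ:est-hyp-2} for all $\alpha \in \R$ and $t \in [0, T_0]$.

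To pass from $[0, T_0]$ to the full interval $[0, T]$, I partition $[0, T]$ into finitely many subintervals of length at most $T_0$ and iterate the semigroup identity $S(t) = S(t - t_k) \circ \cdots \circ S(t_1)$. The composition of finitely many FIOs in this class is again an FIO with phase generating the composed canonical transformation and amplitude in $S^0$. The main obstacle of the proof is to verify that this composition preserves (B1)--(B3) \emph{with globally uniform constants}: specifically, that the graph condition and the global estimate on $y\cdot\xi - \varphi$ hold uniformly in $t \in [0, T]$ for the composed phase. This reduces to global uniform bounds on the Hamilton flow of the real principal symbol of $a(t,x,\xi)$ and its derivatives, which follow from strict hyperbolicity together with the global symbol estimates \eqref{EQ:apps-symbol}. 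Granting this, the small-time argument applied to the globally composed propagator delivers Theorem \ref{THM:hyp1}.
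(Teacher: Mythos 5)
Your proposal takes essentially the same route as the paper: invoke Kumano-go's construction to realise the propagator as a Fourier integral operator satisfying the hypotheses of Theorem~\ref{main}/Corollary~\ref{Cor1}, apply the global $L^p$-boundedness, and shift Sobolev orders by composition with pseudodifferential operators via the global FIO calculus. The paper compresses these steps by citing Kumano-go \cite[Ch.~10, Thm.~4.1]{Kumano-go:BOOK-pseudos} and the calculus of \cite{RS-weighted:MN}, whereas you spell out the Egorov step and the small-time/iteration decomposition more explicitly, but the argument is the same.
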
 
 
 In particular, Theorem \ref{THM:hyp1} eliminates the weight loss in the global estimates
 estimates for solutions as it was obtained in \cite[Theorems 5.1 and 5.2]{CR}.
 The transition between Sobolev spaces for obtaining estimate \eqref{EQ:est-hyp-2}
 for all $\alpha$ can be done by using the global calculus of Fourier integral operators
 developed by the authors in \cite {RS-weighted:MN}.


\begin{thebibliography}{{{\`E}}70}

\bibitem[AF78]{AF}
K.~Asada and D.~Fujiwara.
\newblock On some oscillatory integral transformations in {$L^{2}({\bf
  R}^{n})$}.
\newblock {\em Japan. J. Math. (N.S.)}, 4(2):299--361, 1978.

\bibitem[Bea82]{Be}
M.~Beals.
\newblock {$L^p$} boundedness of {F}ourier integral operators.
\newblock {\em Mem. Amer. Math. Soc.}, 38(264):viii+57, 1982.

\bibitem[Cor98]{Coriasco:SG-FIOs-II}
S.~Coriasco.
\newblock Fourier integral operators in {SG} classes. {II}. {A}pplication to
  {SG} hyperbolic {C}auchy problems.
\newblock {\em Ann. Univ. Ferrara Sez. VII (N.S.)}, 44:81--122 (1999), 1998.

\bibitem[CR10]{CR-CRAS}
S.~Coriasco and M.~Ruzhansky.
\newblock On the boundedness of {F}ourier integral operators on {$L^p(\Bbb
  R^n)$}.
\newblock {\em C. R. Math. Acad. Sci. Paris}, 348(15-16):847--851, 2010.

\bibitem[CR14]{CR}
S.~Coriasco and M.~Ruzhansky.
\newblock Global {$L^p$} continuity of {F}ourier integral operators.
\newblock {\em Trans. Amer. Math. Soc.}, 366(5):2575--2596, 2014.

\bibitem[CW77]{CW}
R.~R. Coifman and G.~Weiss.
\newblock Extensions of {H}ardy spaces and their use in analysis.
\newblock {\em Bull. Amer. Math. Soc.}, 83(4):569--645, 1977.

\bibitem[DH72]{Duistermaat-Hormander:FIOs-2}
J.~J. Duistermaat and L.~H{\"o}rmander.
\newblock Fourier integral operators. {II}.
\newblock {\em Acta Math.}, 128(3-4):183--269, 1972.

\bibitem[DSFS14]{Staubach-Dos-Santos-Ferreira:local-global-FIOs-MAMS}
D.~Dos Santos~Ferreira and W.~Staubach.
\newblock Global and local regularity of {F}ourier integral operators on
  weighted and unweighted spaces.
\newblock {\em Mem. Amer. Math. Soc.}, 229(1074):xiv+65, 2014.

\bibitem[Dui96]{Duistermaat:FIO-book-1996}
J.~J. Duistermaat.
\newblock {\em Fourier integral operators}, volume 130 of {\em Progress in
  Mathematics}.
\newblock Birkh{\"a}user Boston, Inc., Boston, MA, 1996.

\bibitem[{{\`E}}70]{Es}
G.~I. {{\`E}}skin.
\newblock Degenerate elliptic pseudodifferential equations of principal type.
\newblock {\em Mat. Sb. (N.S.)}, 82(124):585--628, 1970.

\bibitem[FS72]{FS}
C.~Fefferman and E.~M. Stein.
\newblock {$H^{p}$} spaces of several variables.
\newblock {\em Acta Math.}, 129(3-4):137--193, 1972.

\bibitem[Fuj79]{Fu}
D.~Fujiwara.
\newblock A construction of the fundamental solution for the {S}chr{\"o}dinger
  equations.
\newblock {\em Proc. Japan Acad. Ser. A Math. Sci.}, 55(1):10--14, 1979.

\bibitem[HLZ15]{GLU}
Q.~Hong, G.~Lu and L.~Zhang.
\newblock $L^p$ boundedness of rough bi-parameter Fourier integral operators.
\newblock arXiv:1510.00986.

\bibitem[H{\"o}r71]{Ho}
L.~H{\"o}rmander.
\newblock Fourier integral operators. {I}.
\newblock {\em Acta Math.}, 127(1-2):79--183, 1971.

\bibitem[Kg76]{Ku}
H.~Kumano-go.
\newblock A calculus of {F}ourier integral operators on {$R^{n}$} and the
  fundamental solution for an operator of hyperbolic type.
\newblock {\em Comm. Partial Differential Equations}, 1(1):1--44, 1976.

\bibitem[Kg81]{Kumano-go:BOOK-pseudos}
H.~Kumano-go.
\newblock {\em Pseudodifferential operators}.
\newblock MIT Press, Cambridge, Mass., 1981.
\newblock Translated from the Japanese by the author, R{{\'e}}mi Vaillancourt
  and Michihiro Nagase.

\bibitem[KgN70]{KN}
H.~Kumano-go and M.~Nagase.
\newblock {$L^{p}$}-theory of pseudo-differential operators.
\newblock {\em Proc. Japan Acad.}, 46:138--142, 1970.

\bibitem[Miy80]{Mi}
A.~Miyachi.
\newblock On some estimates for the wave equation in {$L^{p}$} and {$H^{p}$}.
\newblock {\em J. Fac. Sci. Univ. Tokyo Sect. IA Math.}, 27(2):331--354, 1980.

\bibitem[Per80]{Pe}
J.~ C.~Peral.
\newblock {$L^p$}-estimates for the wave equation.
\newblock {\em J. Funct. Anal.}, 36(1):114--145, 1980.

\bibitem[RLS13]{Rodriguez-Lopez-Staubach:rough-FIOs}
S.~Rodr{\'{\i}}guez-L{{\'o}}pez and W.~Staubach.
\newblock Estimates for rough {F}ourier integral and pseudodifferential
  operators and applications to the boundedness of multilinear operators.
\newblock {\em J. Funct. Anal.}, 264(10):2356--2385, 2013.

\bibitem[RS06a]{RS}
M.~Ruzhansky and M.~Sugimoto.
\newblock Global {$L^2$}-boundedness theorems for a class of {F}ourier integral
  operators.
\newblock {\em Comm. Partial Differential Equations}, 31(4-6):547--569, 2006.

\bibitem[RS06b]{RS2}
M.~Ruzhansky and M.~Sugimoto.
\newblock A smoothing property of {S}chr{\"o}dinger equations in the critical
  case.
\newblock {\em Math. Ann.}, 335(3):645--673, 2006.

\bibitem[RS11]{RS-weighted:MN}
M.~Ruzhansky and M.~Sugimoto.
\newblock Weighted {S}obolev {$L^2$} estimates for a class of {F}ourier
  integral operators.
\newblock {\em Math. Nachr.}, 284(13):1715--1738, 2011.

\bibitem[RS12a]{RS4}
M.~Ruzhansky and M.~Sugimoto.
\newblock Smoothing properties of evolution equations via canonical transforms
  and comparison principle.
\newblock {\em Proc. Lond. Math. Soc. (3)}, 105(2):393--423, 2012.

\bibitem[RS12b]{RS3}
M.~Ruzhansky and M.~Sugimoto.
\newblock Structural resolvent estimates and derivative nonlinear
  {S}chr{\"o}dinger equations.
\newblock {\em Comm. Math. Phys.}, 314(2):281--304, 2012.

\bibitem[Ruz01]{Ruzhansky:CWI-book}
M.~Ruzhansky.
\newblock {\em Regularity theory of {F}ourier integral operators with complex
  phases and singularities of affine fibrations}, volume 131 of {\em CWI
  Tract}.
\newblock Stichting Mathematisch Centrum, Centrum voor Wiskunde en Informatica,
  Amsterdam, 2001.

\bibitem[Ruz09]{Ruzhansky:FIOs-local-global}
M.~Ruzhansky.
\newblock On local and global regularity of {F}ourier integral operators.
\newblock In {\em New developments in pseudo-differential operators}, volume
  189 of {\em Oper. Theory Adv. Appl.}, pages 185--200. Birkh{\"a}user, Basel,
  2009.

\bibitem[SSS91]{SSS}
A.~Seeger, C.~D. Sogge, and E.~M. Stein.
\newblock Regularity properties of {F}ourier integral operators.
\newblock {\em Ann. of Math. (2)}, 134(2):231--251, 1991.

\bibitem[Ste70]{St}
E.~M. Stein.
\newblock {\em Singular integrals and differentiability properties of
  functions}.
\newblock Princeton Mathematical Series, No. 30. Princeton University Press,
  Princeton, N.J., 1970.

\bibitem[Ste93]{St2}
E.~M. Stein.
\newblock {\em Harmonic analysis: real-variable methods, orthogonality, and
  oscillatory integrals}, volume~43 of {\em Princeton Mathematical Series}.
\newblock Princeton University Press, Princeton, NJ, 1993.
\newblock With the assistance of Timothy S. Murphy, Monographs in Harmonic
  Analysis, III.

\bibitem[Sug92]{Su}
M.~Sugimoto.
\newblock On some {$L^p$}-estimates for hyperbolic equations.
\newblock {\em Ark. Mat.}, 30(1):149--163, 1992.

\bibitem[Tao04]{Tao:weak11}
T.~Tao.
\newblock The weak-type {$(1,1)$} of {F}ourier integral operators of order
  {$-(n-1)/2$}.
\newblock {\em J. Aust. Math. Soc.}, 76(1):1--21, 2004.

\end{thebibliography}


\end{document}